
\documentclass{article}
\PassOptionsToPackage{numbers, compress}{natbib}
     \usepackage[preprint]{neurips_2020}

\usepackage[utf8]{inputenc} 
\usepackage[T1]{fontenc}    
\usepackage{hyperref}       
\usepackage{url}            
\usepackage{booktabs}       
\usepackage{amsfonts}       
\usepackage{nicefrac}       
\usepackage{microtype}      
\usepackage{microtype}
\usepackage{graphicx}
\usepackage{subfigure}
\usepackage{booktabs} 

\usepackage{gensymb}

\usepackage{amsmath,amsfonts,amssymb,bbm,amsthm,multirow,wrapfig}
\usepackage{hyperref}

\def\sD{\mathcal{D}}
\def\sH{\mathcal{H}}
\def\sV{\mathcal{V}}
\def\sX{\mathcal{X}}
\def\sT{\mathcal{T}}

\def\sR{\mathcal{R}}
\def\sP{\mathcal{P}}
\def\sQ{\mathcal{Q}}
\def\sF{\mathcal{F}}
\def\sB{\mathcal{B}}
\def\sW{\mathcal{W}}
\def\sL{\mathcal{L}}

\def\tsP{\widetilde{\sP}}
\def\tsH{\widetilde{\sH}}
\def\tsX{\widetilde{\sX}}

\def\bv{\mathbf{v}}
\def\be{\mathbf{e}}

\def\tsT{\widetilde{\mathcal{T}}}

\def\tf{\widetilde{f}}
\def\tp{\widetilde{p}}
\def\tq{\widetilde{q}}
\def\tw{\widetilde{w}}

\def\tP{\widetilde{P}}

\def\th{\widetilde{h}}
\def\tX{\widetilde{X}}

\def\tW{\widetilde{W}}
\def\kmax{k_{\text{max}}}
\def\bmax{b_{\text{max}}}
\def\1{\mathbbm{1}}
\newcommand{\rn}{\mathbb{R}}

\def\cip{\,{\buildrel p \over \rightarrow}\,}

\def\ot{\otimes}

\def\E{\mathbb{E}}

\def\conv{\operatorname{Conv}}
\def\mix{\operatorname{-mix}}

\def\simiid{\overset{iid}{\sim}}

\newtheorem{lem}{Lemma}[section]
\newtheorem{thm}{Theorem}[section]
\newtheorem{cor}{Corollary}[section]



\title{Improving Nonparametric Density Estimation with Tensor Decompositions}

%

\author{%
  Robert A.~Vandermeulen \\
  Machine Learning Group\\
  Technische Universit\"at Berlin\\
  Berlin, Germany \\
  \texttt{vandermeulen@tu-berlin.de} \\
}
\begin{document}
\maketitle
\begin{abstract}

  While nonparametric density estimators often perform well on low dimensional data, their performance can suffer when applied to higher dimensional data, owing presumably to the curse of dimensionality. One technique for avoiding this is to assume no dependence between features and that the data are sampled from a separable density. This allows one to estimate each marginal distribution independently thereby avoiding the slow rates associated with estimating the full joint density. This is a strategy employed in naive Bayes models and is analogous to estimating a rank-one tensor. In this paper we investigate whether these improvements can be extended to other simplified dependence assumptions which we model via nonnegative tensor decompositions. In our central theoretical results we prove that restricting estimation to low-rank nonnegative PARAFAC or Tucker decompositions removes the dimensionality exponent on bin width rates for multidimensional histograms. These results are validated experimentally with high statistical significance via direct application of an existing nonnegative tensor factorization to histogram estimators.
\end{abstract}
\section{Introduction}\label{sec:introduction}
\emph{Nonparametric density estimation} is the task of estimating a density $f$ from data without assuming that $f$ belongs to some parametric class of densities, e.g. the space multivariate Gaussian distributions. Some common nonparametric density estimators include the histogram estimator and the kernel density estimator (KDE). While nonparametric density estimation has been shown to be effective for many tasks, it has been observed empirically that estimator performance typically declines as data dimensionality increases, a manifestation of the curse of dimensionality. For the histogram and KDE this phenomena also has a precise mathematical analog. For these estimators universal consistency is achieved iff $n\to \infty$ and $h\to 0$,  with $nh^d \to \infty$, where $n$ is number of samples and $h$ is the bin width for the histogram and bandwidth parameter for the KDE \cite{gyorfi85}. 

One of the most common approaches to alleviating the curse of dimensionality is \emph{dimensionality reduction}. A dimensionality reduction technique typically attempts to transform a high dimensional representation into a lower dimensional one that removes dependences within data. Feature selection techniques, for example, usually explicitly remove highly dependent features \cite{song12,chen17}. In PCA one finds the best $d$-dimensional affine subspace for approximating data. If the PCA model fits well, it implies that, given $d$ features of a $D$-dimensional sample, there exists a good linear prediction of the remaining $D-d$ features. The \emph{manifold hypothesis} \cite{cayton05}, which is often touted as a general explanation for why high dimensional problems are learnable \cite{bengio13}, can also be viewed as a model of dependence. For example, if we assume that a random vector $[X,Y]^T$ lies on the one-dimensional sphere, $X^2+Y^2 = 1$, and $Y$ is known, then $X$ can only assume one of two values $X = \pm \sqrt{1-Y^2}$. More generally, the assumption that the data lies on a manifold implies local dependence. This is because any sufficiently small region of the support manifold can be well approximated by an affine subspace, and thus there exists a linear dependence between the features like in PCA. Interestingly, assuming that features are \emph{not} dependent yields another approach to overcoming the curse of dimensionality. 

In a naive Bayes model one assumes no dependence between features, i.e. our target density is separable,
\begin{align}\label{eqn:nb}
  f\left(x_1,\ldots,x_d\right) = f_1\left(x_1\right)f_2\left(x_2\right) \cdots f_d\left(x_d\right).
\end{align}

In order to estimate $f$ one can now simply estimate the marginal distributions and multiply them. Because the dimensionality of each marginal is one, one can use a histogram or KDE and achieve $nh\to \infty$ rates on bin width/bandwidth while preserving consistent estimation, thereby circumventing the curse of dimensionality. The separability assumption is rarely satisfied in practice so naive Bayes models are typically not used for density estimation directly, but may be used for some other task such as classification via a likelihood ratio test.

In this paper we consider relaxations of the naive Bayes model based on the assumption that a density is a mixture of separable densities. Our models are inspired by nonnegative tensor factorizations so we term them generally \emph{nonparametric nonnegative tensor factorization} (NNTF) \emph{models}. The first model is related to nonnegative PARAFAC \cite{shashua05} and is commonly known as a \emph{multi-view model} in statistics or machine learning literature:
\begin{equation}
  f\left(x_1,\ldots,x_d\right) = \sum_{i=1}^k w_i f_{i,1}\left(x_1\right)f_{i,2}\left(x_2\right) \cdots f_{i,d}\left(x_d\right).
\end{equation}
This is equivalent to the assumption that the features are independent conditioned on an unobserved discrete random variable taking on $k$ values.
Our second model is based on the nonnegative Tucker decomposition \cite{kim07}. In this model it is assumed that there are $d$ collections of $k$ one-dimensional densities, $\sF_1,\ldots, \sF_d$ with $\sF_i = \left\{f_{i,1},\ldots, f_{i,k} \right\}$, and some probability measure which which randomly selects one density from each collection. This measure a can be represented by a tensor $W \in\rn^{k^{\times d}}$ where the probability of selecting $f_{1,i_1},\ldots, f_{d,i_d}$ is $W_{i_1,\ldots,i_d}$. To sample from this model we first randomly select the marginal distributions $f_{1,i_1},\ldots,f_{d,i_d}$ according to $W$ and then independently sample each feature according the randomly selected marginal distribution $X_j \sim f_{j,i_j}$. The density of this model is
\begin{equation}
  f\left(x_1,\ldots,x_d\right)
  \quad= \sum_{i_1 =1}^k \cdots \sum_{i_d =1}^k W_{i_1,\ldots,i_d}f_{1,i_1}(x_1)\cdots f_{d,i_d}(x_d).\label{eqn:tucker}
\end{equation}
We are unaware of previous literature investigating this model so we will simply term it the \emph{Tucker model}.

 In Section \ref{sec:theory} we prove that there exists a trade-off between the rate on bin width $h$ and number of components $k$: to control estimation error\footnote{For an estimator $V$ restricted space of densities $\sP$, the \emph{estimation error} refers the difference between $\left\|V-p\right\|$ and $\min_{q\in \sP} \left\|q - p\right\|$, where $p$ is the target density.} we require $k/h$ to be asymptotically dominated by $n$ for the multi-view histogram and $k/h + k^d$ to be asymptotically dominated by $n$ for the Tucker histogram (both of these rates ignore logarithmic factors). Note that for the multi-view histogram this rate is not dependent on $d$. Allowing $h$ to shrink as aggressively as possible (which we pay for with a slow rate on $k$) we show that there exist \emph{universally consistent} histogram estimators which achieve $nh/\log\left(h^{-1}\right) \to \infty$ rate on bin width, thereby removing the dependence on dimension and approximately attaining rates possible for densities of the form in (\ref{eqn:nb}) while still controlling estimation error. We show that these are the approximately fastest possible rates via matching lower bound.
In Section \ref{sec:experiments} we show that we can use an existing low-rank nonnegative Tucker factorization algorithm to fit our model and demonstrate empirically that fitting histograms to a Tucker model outperforms the standard histogram estimator with very high statistical significance.

While this paper focuses on histogram estimation and presents a promising, readily implementable improvement to the standard histogram estimator in Section \ref{sec:experiments}, its primary purpose is to showcase the potential of utilizing concepts nonnegative tensor factorization to improve performance in nonparametric statistical methods.

\subsection{Previous Work}

Nonparametric density estimation has been extensively studied with the histogram estimator and KDE being by far the most well known. There do exist, however, alternative methods for density estimation, e.g. the \emph{forest density estimator} \cite{liu11} and \emph{$k$-nearest neighbor density estimator} \cite{mack79}. The $L_1,L_2$ and $L_\infty$ convergence of the histogram and KDE has been studied extensively \cite{gyorfi85,devroye01,tsybakov08,jiang17}. The KDE is generally accepted as being the superior density estimator with some mathematical justification \cite{silverman86}. Numerous modifications and extensions of the KDE have been proposed including utilizing variable bandwidth \cite{terrell92}, robust KDEs \cite{kim12,vandermeulen13,vandermeulen14}, and methods for enforcing support boundary constraints \cite{schuster85}. Finally we mention one recent paper \cite{kim18} that demonstrated that uniform convergence of a KDE to its population estimate suffered when the intrinsic dimension of the data was lower than the ambient dimension, a phenomenon seemingly at odds with the curse of dimensionality. 

For our review of NNTF models we also include a general review of tensor/matrix factorizations since both can be viewed being low-rank models. In particular, for the multi-view model we have
\begin{equation}\label{eqn:lrtens}
	\sum_{i=1}^k w_i f_{i,1}\left(x_1\right) \cdots f_{i,d}\left(x_d\right) \sim \sum_{i=1}^k \lambda_i \bv_{i,1}\otimes  \cdots \otimes \bv_{i,d}.
\end{equation}
We further note that for histogram estimation, once data has been assigned to bins, finding a good multi-view or Tucker histogram is analogous to estimating a probability tensor with a nonnegative factorization (we show this rigorously in Section \ref{sec:experiments}).

A great deal of work has gone into leveraging low-rank assumptions to improve matrix estimation, particularly in the field of \emph{compressed sensing} \cite{donoho06,recht10}. In compressed sensing one has access to a collection of random linear measurements of an unknown low-rank matrix to be estimated. Fitting a matrix to these measurements with a nuclear norm regularized optimization problem achieves estimation bounds better than those possible without the low-rank assumption. These techniques have been effectively applied to problems such as matrix completion, multivariate regression, and estimating autoregressive models \cite{negahban11,negahban12}. Unfortunately such techniques are not extensible to histogram estimation because, in the density estimation setting, data are not linearly sampled from the target model. Furthermore how to extend compressed sensing techniques to general tensors is not clear.

General matrix/tensor factorization, including nonnegative matrix/tensor factorizations, have been extensively studied despite their inherent difficulty due to non-convexity. The works \cite{donoho03,arora12} present potential theoretical grounds for avoiding the computational difficulties of nonnegative matrix factorization. Some algorithms for finding nonnegative tensor factorizations are mentioned in Section \ref{sec:experiments}. One notable approach to tensor factorization is to assume, in the tensor representation in (\ref{eqn:lrtens}), that $d \ge 3$ and the collections of vectors $\bv_{1,j},\ldots, \bv_{k,j}$ are linearly independent for all $j$. Under this assumption we are guaranteed that the factorization (\ref{eqn:lrtens}) is unique \cite{allman09}. In \cite{anandkumar14} the authors present a method for recovering this factorization efficiently and demonstrate its utility for a variety of tasks. This work was extended in \cite{song14} to recover a multi-view KDE satisfying an analogous linear independence assumption. This is the only work of its type of which we are aware. In this work the authors investigate the sample complexity of their estimator but do not demonstrate that their technique has potential for improving rates for nonparametric density estimation in general. Finally we note that nonparametric applications of the Tucker decomposition have been utilized in Bayesian statistics \cite{schein16}.  We are unaware of any literature describing the model we introduce in (\ref{eqn:tucker}).

\section{Theoretical Results}\label{sec:theory}
In this section we mathematically demonstrate that histogram estimators can achieve greater estimation accuracy by restricting to NNTF models. To simplify analysis we will only consider densities on $\left[0,1\right)^d$ and analyze number of bins per dimension $b$ which is the inverse of the bin width, i.e. $b = 1/h$. We prove that there exists a trade-off between rates on $b$ and $k$. Furthermore we show that the approximate fastest possible rate on $b$ while still uniformly controlling for estimator variance and remaining universally consistent is $n/\left(b\log b\right) \to \infty$.  Before proving these results we must introduce a fair amount of notation.
\subsection{Notation}
All norms in Section \ref{sec:theory} are the $\ell^1, L^1,$ or total variation norm for vectors/tensors, densities, and measures respectively. Note that these norms are analogous e.g. the $L^1$ norm of a probability density function is the same as the total variation norm on the probability measure associated with the density. Let $\sD_d$ be the set of all densities on $\left[0,1\right)^d$. By \emph{density} we mean probability measures that are absolutely continuous with respect to the $d$-dimensional Lebesgue measure. We define a \emph{probability vector} or \emph{probability tensor} to simply mean a vector or tensor whose entires are nonnegative and sum to one. Let $\Delta_b$ denote the set of probability vectors in  $\rn^b$and $\sT_{d,b}$ the set of probability tensors in $\rn^{ b^{\times d}}$. Let $\sT_{d,b}^k$ be the set of tensors which are a convex combination of $k$ separable probability tensors, i.e.

\begin{align*}
  \sT_{d,b}^k \triangleq \left\{ \sum_{i = 1}^k w_i \prod_{j=1}^d p_{i,j} \middle| w\in \Delta_k, p_{i,j} \in \Delta_b\right\}.
\end{align*}
In this paper, \emph{the product symbol $\prod$ will always mean the standard outer product}, e.g. set product or tensor product. For any natural number $b$ let $[b] =\left\{1,\ldots,b\right\}$. For a multi-index $A \in \left[b\right]^d$ we define $\be_{d,b,A}$ as the element of $\sT_{d,b}$ where the $(A_1,\ldots, A_d)$-th entry is one and is zero elsewhere.

The following is the set of probability tensors constructed via a nonnegative Tucker factorization
\begin{align*}
  \tsT_{d,b}^k \triangleq \left\{ \sum_{S \in \left[k\right]^{\times d}} W_S \prod_{j=1}^d p_{i,S_j} \middle| W\in \sT_{b,k}, p_{i,j} \in \Delta_b\right\}.
\end{align*}
We will now construct the space of histograms on $\left[0,1\right)^d$. We begin with one-dimensional histograms. We define $h_{1,b,i}$ with $i \in \left[b\right]$ to be the one dimensional histogram where all weight is allocated to the $i$th bin. Formally we define this as
\begin{align*}
  h_{1,b,i}\left(x\right) \triangleq b \1\left(\frac{i-1}{b}\le x <\frac{i}{b} \right).
\end{align*}
Note that this is a valid density due to the leading $b$ coefficient.
We use these to construct higher-dimensional histograms. For a multi-index $A\ \in \left[b\right]^d$, let 
\begin{align*}
  h_{d,b,A} \triangleq \prod_{i=1}^d h_{1,b,A_i},
\end{align*}
the histogram whose entire density is allocated to the bin indexed by $A$. Finally we define $\Lambda_{d,b,A}$ to be the support of $h_{d,b,A}$, i.e. the ``bins'' of a histogram estimator,
\begin{align*} 
  \Lambda_{d,b,A} \triangleq \prod_{i=1}^d \left[\frac{A_i-1}{b},\frac{A_i}{b}\right). %
\end{align*}

For a sequence of points in $\left[0,1\right)^d$, $\sX = \left(X_1,\ldots, X_n\right)$, the standard histogram estimator is 
\begin{align*}
  H_{d,b}\left(\sX\right) \triangleq \frac{1}{n}\sum_{i=1}^n\sum_{A \in [b]^d} h_{d,b,A}\1\left(X_i\in \Lambda_{d,b,A} \right).
\end{align*}

Let $\sH_{d,b} \triangleq \conv\left(\left\{h_{d,b,A} \middle| A \in \left[ b\right]^d \right\}\right)$, the set of all $d$-dimensional histograms with $b$ bins per dimension. Let $\sH_{d,b}^k$ be the set of histograms with at most $k$ separable components, i.e.
\begin{equation}\label{eqn:lrhist}
  \sH_{d,b}^k \triangleq \left\{\sum_{i=1}^k w_i\prod_{j=1}^d f_{i,j} \middle| w\in \Delta_k, f_{i,j}\in \sH_{1,b} \right\}.
\end{equation}
We will refer elements in this space \emph{multi-view histograms}. Analogously we define the space of \emph{Tucker histograms} to be
\begin{equation*}
  \tsH_{d,b}^k = \left\{\sum_{S \in \left[k\right]^{\times d}} W_S \prod_{i=1}^d f_{i,S_i}\middle| W \in \sT_{d,k}, f_{i,j} \in \sH_{1,b} \right\}.
\end{equation*}

We emphasize that the collections of densities $\sH_{d,b}^k$ and $\tsH_{d,b}^k$ the primary objects of interest in this paper as they represent NNTF histograms. The theoretical results we present are concerned with finding good density estimators restricted to these sets as $k$ and $b$ vary.

Note that there exists a $\ell^1 \to L^1$ linear isometry $U_{d,b}:\sT_{d,b} \to \sH_{d,b}$ with $U_{d,b}$ defined as
\begin{equation*}
  U_{d,b}(\be_{d,b,A}) = h_{d,b,A}.
\end{equation*}
The inverse function, $U^{-1}_{d,b}$, simply transforms a histogram to the tensor representing its bin weights and $U_{d,b}$ performs the reverse transformation. Note that $U_{d,b}$ is also a bijection between $\sT_{d,b}^k \to \sH_{d,b}^k$ and $\tsT_{d,b}^k \to \tsH_{d,b}^k$. Much of our analysis on histograms will be performed on the space of probability tensors with the analysis being translated to histograms via this operator.

For a set of vectors $\sV$ we define $k\mix\left(\sV\right)\triangleq \left\{\sum_{i=1}^k w_i v_i \middle| w\in \Delta_k, v_i \in \sV \right\}$, i.e. the set of convex combinations of collections of $k$ vectors from $\sV$. We define $N\left(\sV, \varepsilon \right)$ to be the minimum cardinality for a subset of of $\sV$ which $\varepsilon$-covers $\sV$ (with closed balls) with respect to the $\left\|\cdot \right\|$ metric. It will be clear from context whether $\left\|\cdot \right\|$ represents the $\ell^1$, $L^1$ or total variation norm.
\subsection{Preliminary Results}
For brevity the main text only contains a full proof of Lemma \ref{lem:finiteest}. The remaining full proofs can be found in the appendix. We include general descriptions of the proof techniques we use for multi-view histogram results. These are similar to the techniques we use for Tucker histograms. Our general proof technique is to find good covers of spaces of densities, i.e. $\sH_{d,b}^k$ and $\tsH_{d,b}^k$, and then apply an existing algorithm for selecting a good estimator from finite collections of densities given data. We begin by establishing a covering number bound on the space of probability vectors via an adaptation of a standard result presented in \cite{devroye01}.
\begin{lem} \label{lem:histcover}
  For all $0 < \varepsilon \le 1$ we have that 
  $N\left(\Delta_{b},\varepsilon\right)
  \le 
  \left(\frac{2b}{\varepsilon} \right)^b$.
\end{lem}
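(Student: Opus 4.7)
The plan is to construct an explicit $\varepsilon$-net for $\Delta_b$ inside $\Delta_b$ by rounding each probability vector onto a rational grid with denominator $m$, and then to count those rational points via stars and bars. I would fix $m = \lceil b/\varepsilon \rceil$ and define
\[
G_m \triangleq \left\{ q \in \Delta_b : m q_i \in \mathbb{Z} \text{ for every } i \right\},
\]
the probability vectors whose entries are nonnegative integer multiples of $1/m$. The standard stars-and-bars count then gives $|G_m| = \binom{m+b-1}{b-1}$.

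To verify that $G_m$ is an $\varepsilon$-net, I would start from an arbitrary $p \in \Delta_b$, set $q_i = \lfloor m p_i \rfloor/m$, and observe that the deficit $r \triangleq m - \sum_i \lfloor m p_i \rfloor$ equals the sum of the fractional parts of $m p_1,\ldots, m p_b$, hence is an integer in $\{0,1,\ldots,b-1\}$. I would then add $1/m$ to any $r$ coordinates (say, those with the largest fractional parts), producing $q' \in G_m$; no coordinate overshoots $1$ because the presence of any $m p_i = m$ would force $r = 0$. Each coordinate of $q'$ differs from $p_i$ by at most $1/m$, so $\|p - q'\| \le b/m \le \varepsilon$ by the choice of $m$.

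Finally I would bound the cardinality using $\binom{n}{k} \le n^k$, together with $m \le b/\varepsilon + 1$ and the hypothesis $\varepsilon \le 1$:
\[
|G_m| \le (m + b - 1)^{b-1} \le (b/\varepsilon + b)^{b-1} \le (2b/\varepsilon)^{b-1} \le (2b/\varepsilon)^{b}.
\]
The only mild obstacle is calibrating $m$ so that both the approximation bound $b/m \le \varepsilon$ and the cardinality bound $m + b - 1 \le 2b/\varepsilon$ survive for the same integer choice of $m$; the hypothesis $\varepsilon \le 1$ is precisely what reconciles the two.
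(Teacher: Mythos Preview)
Your argument is correct. The paper, by contrast, does not construct a cover at all: it simply invokes a result from Devroye and Lugosi (Section~7.4 of \cite{devroye01}) stating that for any measures $\mu_1,\ldots,\mu_b$ one has $N(\conv(\{\mu_1,\ldots,\mu_b\}),\varepsilon)\le (b+b/\varepsilon)^b$, and then uses the hypothesis $\varepsilon\le 1$ to replace $b+b/\varepsilon$ by $2b/\varepsilon$. So the two proofs differ genuinely in style: the paper treats the covering-number bound as a black box, whereas you build the net explicitly via the rational grid $G_m$ and count it with stars and bars. Your route is fully self-contained, produces an explicit cover inside $\Delta_b$, and in fact yields the marginally sharper bound $(2b/\varepsilon)^{b-1}$ before you relax it. The paper's route is shorter only because it outsources the work to the reference. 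One cosmetic remark: your ``overshooting~1'' discussion is unnecessary, since once you know that all $q_i'\ge 0$ and $\sum_i q_i' = 1$, the bound $q_i'\le 1$ is automatic; but this does no harm.
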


We can extend this to a covering number for the space of separable probability tensors.

\begin{lem}\label{lem:simplecover}
  \sloppy For all $0 < \varepsilon \le 1 $ we have that $N\left(\sT_{d,b}^1, \varepsilon \right) \le \left(\frac{2bd}{\varepsilon} \right)^{bd}$.
\end{lem}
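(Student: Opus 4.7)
The plan is to leverage Lemma \ref{lem:histcover} by building a cover of $\sT_{d,b}^1$ out of tensor products of elements drawn from a cover of $\Delta_b$. Concretely, I would first invoke Lemma \ref{lem:histcover} at precision $\varepsilon/d$ to obtain a set $\sQ \subset \Delta_b$ with $|\sQ|\le (2bd/\varepsilon)^{b}$ that $\varepsilon/d$-covers $\Delta_b$ in $\ell^1$. I would then propose $\sQ^{\otimes d} \triangleq \bigl\{q_1 \otimes \cdots \otimes q_d : q_j \in \sQ\bigr\}$ as the candidate cover of $\sT_{d,b}^1$. Since $|\sQ^{\otimes d}| \le |\sQ|^d \le (2bd/\varepsilon)^{bd}$, the cardinality bound follows immediately once the covering property is verified.

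The heart of the argument is the standard telescoping inequality for tensor products of probability vectors: for any $p_1,\ldots,p_d,q_1,\ldots,q_d \in \Delta_b$,
\begin{equation*}
\Bigl\| \prod_{j=1}^d p_j - \prod_{j=1}^d q_j \Bigr\| \;\le\; \sum_{j=1}^d \|p_j - q_j\|.
\end{equation*}
I would prove this by the identity
\begin{equation*}
\prod_{j=1}^d p_j - \prod_{j=1}^d q_j \;=\; \sum_{j=1}^d \Bigl(\prod_{i<j} q_i\Bigr) \otimes (p_j - q_j) \otimes \Bigl(\prod_{i>j} p_i\Bigr),
\end{equation*}
then applying the triangle inequality along with the multiplicativity $\|a\otimes b\| = \|a\|\cdot \|b\|$ of the $\ell^1$ norm on tensor products. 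Because each $p_i$ and $q_i$ has unit $\ell^1$ norm, each summand collapses to $\|p_j - q_j\|$.

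Given an arbitrary $\prod_{j=1}^d p_j \in \sT_{d,b}^1$, I would pick $q_j \in \sQ$ with $\|p_j - q_j\| \le \varepsilon/d$ for each $j$, and the telescoping bound yields
\begin{equation*}
\Bigl\|\prod_{j=1}^d p_j - \prod_{j=1}^d q_j \Bigr\| \;\le\; d \cdot \frac{\varepsilon}{d} \;=\; \varepsilon,
\end{equation*}
establishing that $\sQ^{\otimes d}$ is indeed an $\varepsilon$-cover. The proof is then complete.

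There is no serious obstacle here: the only step requiring care is the tensor-product telescoping bound, which is routine once one is careful to replace factors one index at a time so that the partial products remaining on either side of $(p_j - q_j)$ still have unit norm.
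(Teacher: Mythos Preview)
Your proposal is correct and follows essentially the same approach as the paper: cover $\Delta_b$ at precision $\varepsilon/d$ via Lemma~\ref{lem:histcover}, take all $d$-fold tensor products as the candidate cover, and control the error using the product-measure inequality $\bigl\|\prod_j p_j - \prod_j q_j\bigr\| \le \sum_j \|p_j - q_j\|$. The only cosmetic difference is that the paper cites this inequality (Lemma~3.3.7 in \cite{reiss89}) whereas you prove it directly via the telescoping identity, which is a perfectly fine and arguably more self-contained choice.
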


\begin{proof}[Proof sketch]
  \sloppy Combine Lemma \ref{lem:histcover} with the following standard bound for product measures (see Lemma 3.3.7 in \cite{reiss89}):
  $\left\|\prod_{i=1}^d q_i -\prod_{j=1}^d \tq_j\right\| \le \sum_{i=1}^d \left\|q_i- \tq_i \right\|.$
\end{proof}
Now we establish the following lemma for covering numbers of mixtures of densities.
\begin{lem}\label{lem:mixcover}
  Let $\sP$ be a set of probability measures, then
  \begin{align*}
    N\left(k\operatorname{-mix}\left(\sP\right), \varepsilon + \delta \right) \le N\left(\sP,\varepsilon\right)^k N\left(\Delta_k,\delta\right).
  \end{align*}
\end{lem}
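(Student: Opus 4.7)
The plan is to build an explicit $(\varepsilon+\delta)$-cover of $k\mix(\sP)$ by taking the ``product'' of an $\varepsilon$-cover of $\sP$ with a $\delta$-cover of the mixing weight simplex $\Delta_k$. Concretely, let $\sP_\varepsilon \subseteq \sP$ be an $\varepsilon$-cover of $\sP$ of minimum cardinality $N(\sP,\varepsilon)$, and let $W_\delta \subseteq \Delta_k$ be a $\delta$-cover of $\Delta_k$ of minimum cardinality $N(\Delta_k,\delta)$. Define
\begin{equation*}
  \sC \triangleq \left\{ \sum_{i=1}^k \tw_i \tq_i \, \middle| \, \tw \in W_\delta,\ \tq_1,\ldots,\tq_k \in \sP_\varepsilon \right\}.
\end{equation*}
Then $|\sC| \le N(\sP,\varepsilon)^k \cdot N(\Delta_k,\delta)$, so it suffices to show $\sC$ is an $(\varepsilon+\delta)$-cover of $k\mix(\sP)$.

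For any $\sum_{i=1}^k w_i q_i \in k\mix(\sP)$, I would pick $\tq_i \in \sP_\varepsilon$ with $\|q_i - \tq_i\| \le \varepsilon$ for each $i$, and $\tw \in W_\delta$ with $\|w - \tw\| \le \delta$ (in $\ell^1$). The natural candidate from $\sC$ is $\sum_{i=1}^k \tw_i \tq_i$. I would then estimate the error by inserting an intermediate term and applying the triangle inequality:
\begin{equation*}
  \left\| \sum_{i=1}^k w_i q_i - \sum_{i=1}^k \tw_i \tq_i \right\|
  \le \left\| \sum_{i=1}^k w_i(q_i - \tq_i) \right\| + \left\| \sum_{i=1}^k (w_i - \tw_i)\tq_i \right\|.
\end{equation*}
The first term is at most $\sum_i w_i \|q_i - \tq_i\| \le \varepsilon \sum_i w_i = \varepsilon$ since $w \in \Delta_k$. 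The second term is at most $\sum_i |w_i - \tw_i|\, \|\tq_i\| = \sum_i |w_i - \tw_i| \le \delta$, because each $\tq_i$ is a probability measure with total variation norm $1$. Combining gives the bound $\varepsilon + \delta$, as desired.

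There is no real obstacle here; the only subtle point is making sure the norms line up correctly, i.e.\ that the $\|\cdot\|$ on $k\mix(\sP)$ (total variation on measures, or $L^1$/$\ell^1$ on densities/vectors by the convention set up earlier in the section) is indeed the one that makes $\|\tq_i\| = 1$ and that makes the $\ell^1$ cover of $\Delta_k$ the correct notion, both of which follow from the norm conventions declared at the start of Section~\ref{sec:theory}. The argument is essentially a clean decomposition of mixture error into ``component error'' plus ``weight error,'' exploiting that convex combinations of unit-norm vectors with $\ell^1$-close coefficients remain close.
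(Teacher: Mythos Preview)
Your proof is correct and essentially identical to the paper's: both construct the cover as mixtures of an $\varepsilon$-cover of $\sP$ weighted by a $\delta$-cover of $\Delta_k$, then split the error via the triangle inequality into a ``component error'' bounded by $\varepsilon$ and a ``weight error'' bounded by $\delta$. The only cosmetic difference is the choice of intermediate term (you insert $\sum_i w_i \tq_i$, the paper inserts $\sum_i \tw_i p_i$), which makes no material difference.
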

\begin{proof}[Proof sketch]
  Use Lemma \ref{lem:histcover} to construct different weightings of $k$ elements from an $\varepsilon$-cover of $\sP$.
\end{proof}
By combining Lemma \ref{lem:mixcover} with Lemma \ref{lem:simplecover} we arrive at covering numbers for the space of multi-view probability tensors.

\begin{lem}\label{lem:lrcover}
  For all $0 < \varepsilon \le 1$ the following holds $N\left(\sT_{d,b}^k, \varepsilon \right) \le \left(\frac{4bd}{\varepsilon} \right)^{bdk} \left( \frac{4k}{\varepsilon}\right)^k$.
\end{lem}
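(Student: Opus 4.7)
The plan is to recognize that $\sT_{d,b}^k$ is exactly $k\mix(\sT_{d,b}^1)$, where $\sT_{d,b}^1$ is the set of separable probability tensors, so the bound follows by combining the three preceding lemmas. Concretely, I would split the target covering radius as $\varepsilon = \varepsilon/2 + \varepsilon/2$ and invoke Lemma \ref{lem:mixcover} with $\sP = \sT_{d,b}^1$ and $\delta = \varepsilon/2$ to obtain
\begin{equation*}
  N\!\left(\sT_{d,b}^k,\varepsilon\right) \;\le\; N\!\left(\sT_{d,b}^1,\varepsilon/2\right)^{k}\,N\!\left(\Delta_{k},\varepsilon/2\right).
\end{equation*}

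Next I would bound each of the two factors. Since $\varepsilon \le 1$ implies $\varepsilon/2 \le 1$, Lemma \ref{lem:simplecover} applies to give $N(\sT_{d,b}^1,\varepsilon/2) \le (4bd/\varepsilon)^{bd}$, and Lemma \ref{lem:histcover} applied to the $k$-dimensional simplex gives $N(\Delta_{k},\varepsilon/2) \le (4k/\varepsilon)^{k}$. Substituting these bounds into the display above yields the stated inequality directly:
\begin{equation*}
  N\!\left(\sT_{d,b}^k,\varepsilon\right) \;\le\; \left(\tfrac{4bd}{\varepsilon}\right)^{bdk}\left(\tfrac{4k}{\varepsilon}\right)^{k}.
\end{equation*}

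There is no real obstacle here; the lemma is a mechanical composition of the previously established covering bounds, the only small choice being how to split the radius between the component cover and the weight-simplex cover. An equal split is the natural choice and suffices for the constants claimed, so no further optimization is needed.
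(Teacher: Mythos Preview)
Your proposal is correct and matches the paper's own proof essentially line for line: recognize $\sT_{d,b}^k = k\mix(\sT_{d,b}^1)$, apply Lemma~\ref{lem:mixcover} with the equal split $\varepsilon/2 + \varepsilon/2$, and plug in Lemmas~\ref{lem:simplecover} and~\ref{lem:histcover} at radius $\varepsilon/2$. There is nothing to add.
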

Through application of the $U_{d,b}$ operator we now have a characterization of the complexity of the space $\sH_{d,b}^k$.

\begin{cor}\label{cor:lrcover}
  For all $0 < \varepsilon \le 1$ following holds $N\left(\sH_{d,b}^k,\varepsilon\right)\le \left(\frac{4bd}{\varepsilon} \right)^{bdk} \left( \frac{4k}{\varepsilon}\right)^k.$
\end{cor}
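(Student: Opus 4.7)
The plan is to invoke the isometry $U_{d,b}$ described in the notation subsection together with Lemma \ref{lem:lrcover}, so the argument reduces to a one-line transfer of a cover from tensor space to histogram space.

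First I would take an $\varepsilon$-cover $\sC \subseteq \sT_{d,b}^k$ with $\left|\sC\right| \le \left(\frac{4bd}{\varepsilon}\right)^{bdk} \left(\frac{4k}{\varepsilon}\right)^k$ guaranteed by Lemma \ref{lem:lrcover}. I would then consider the image $U_{d,b}(\sC) \subseteq \sH_{d,b}^k$, which is well-defined and of the same cardinality because $U_{d,b}$ is a bijection between $\sT_{d,b}^k$ and $\sH_{d,b}^k$. For any $f \in \sH_{d,b}^k$, the preimage $U_{d,b}^{-1}(f) \in \sT_{d,b}^k$ has some $T \in \sC$ within $\ell^1$-distance $\varepsilon$, and since $U_{d,b}$ is an $\ell^1 \to L^1$ linear isometry,
\begin{equation*}
\left\|f - U_{d,b}(T)\right\|_{L^1} = \left\|U_{d,b}\!\left(U_{d,b}^{-1}(f) - T\right)\right\|_{L^1} = \left\|U_{d,b}^{-1}(f) - T\right\|_{\ell^1} \le \varepsilon.
\end{equation*}
This shows $U_{d,b}(\sC)$ is an $\varepsilon$-cover of $\sH_{d,b}^k$ in $L^1$, yielding the claimed bound.

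There is essentially no obstacle here: the substantive work (controlling the covering number of $\sT_{d,b}^k$) is already packaged in Lemma \ref{lem:lrcover}, and the only nontrivial ingredient needed at this step is that $U_{d,b}$ respects both the $k$-mixture-of-separable structure (so it maps $\sT_{d,b}^k$ bijectively onto $\sH_{d,b}^k$) and the metric (so $\varepsilon$-covers transfer with no distortion). Both are asserted in the notation subsection, so the corollary follows as an immediate consequence.
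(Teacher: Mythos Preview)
Your proposal is correct and matches the paper's approach exactly: the paper does not write out a separate proof for this corollary but simply remarks that it follows ``through application of the $U_{d,b}$ operator'' to Lemma~\ref{lem:lrcover}, which is precisely the isometry-transfer argument you have spelled out.
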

The following are analogous results for Tucker histograms.
\begin{lem}\label{lem:tuckcover}
  For all $0 < \varepsilon \le 1$ the following holds $N\left(\tsT_{d,b}^k, \varepsilon \right) \le \left(\frac{4bd}{\varepsilon} \right)^{bdk} \left( \frac{4k^d}{\varepsilon}\right)^{k^d}$.
\end{lem}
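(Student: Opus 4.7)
The plan is to mimic the argument used for $\sT_{d,b}^k$ in Corollary \ref{cor:lrcover}/Lemma \ref{lem:lrcover}, but with the extra piece of machinery needed for the Tucker core tensor $W$. A Tucker tensor in $\tsT_{d,b}^k$ is built from two types of objects: the $dk$ one-dimensional probability vectors $p_{j,i} \in \Delta_b$ (a total of $d$ families of $k$ vectors each) and the core $W \in \sT_{d,k}$. I would cover each type separately and then form all tensor-product reconstructions to cover $\tsT_{d,b}^k$.

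First I would fix error budgets $\varepsilon_1 = \varepsilon/(2d)$ for the one-dimensional vectors and $\varepsilon_2 = \varepsilon/2$ for the core. Applying Lemma \ref{lem:histcover} to $\Delta_b$ gives an $\varepsilon_1$-cover of size at most $(2b/\varepsilon_1)^b = (4bd/\varepsilon)^b$. Since $\sT_{d,k}$ is a set of probability tensors with $k^d$ entries, it is isometric to $\Delta_{k^d}$ and Lemma \ref{lem:histcover} produces an $\varepsilon_2$-cover of $\sT_{d,k}$ of size at most $(2k^d/\varepsilon_2)^{k^d} = (4k^d/\varepsilon)^{k^d}$. Taking independent choices over the $dk$ one-dimensional vectors and the single core tensor gives a candidate cover of $\tsT_{d,b}^k$ of cardinality at most
\begin{equation*}
  \bigl(4bd/\varepsilon\bigr)^{bdk} \bigl(4k^d/\varepsilon\bigr)^{k^d},
\end{equation*}
which matches the claimed bound.

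The remaining step is to verify that this is indeed an $\varepsilon$-cover. Given an arbitrary element $\sum_S W_S \prod_{j=1}^d p_{j,S_j}$ of $\tsT_{d,b}^k$ and its cover approximation $\sum_S \tW_S \prod_{j=1}^d \tp_{j,S_j}$ with $\|W - \tW\| \le \varepsilon_2$ and $\|p_{j,i}-\tp_{j,i}\| \le \varepsilon_1$ for all $i,j$, I would split the difference into a ``core'' term and a ``marginals'' term:
\begin{equation*}
  \sum_S (W_S-\tW_S)\prod_{j=1}^d p_{j,S_j} \;+\; \sum_S \tW_S\left(\prod_{j=1}^d p_{j,S_j} - \prod_{j=1}^d \tp_{j,S_j}\right).
\end{equation*}
The first sum has norm at most $\|W-\tW\|\le \varepsilon_2$ because each $\prod_j p_{j,S_j}$ is a probability tensor. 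For the second sum I would apply Lemma~3.3.7 in \cite{reiss89} (the product-measure inequality used in Lemma \ref{lem:simplecover}) termwise to get $\|\prod_j p_{j,S_j} - \prod_j \tp_{j,S_j}\| \le d\varepsilon_1$, and then use $\sum_S \tW_S = 1$ to bound the second sum by $d\varepsilon_1$. Summing gives $\varepsilon_2 + d\varepsilon_1 = \varepsilon$, completing the argument.

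I do not expect any significant obstacle here; the proof is a bookkeeping generalization of the multi-view case where $W \in \Delta_k$ is replaced by $W \in \sT_{d,k}$, which simply flattens to $\Delta_{k^d}$. The only subtlety worth stating carefully is that the core contributes the factor $(4k^d/\varepsilon)^{k^d}$ (not $(4k/\varepsilon)^k$ as in the PARAFAC case), which reflects the exponential blowup $k^d$ in the number of free parameters of a Tucker core — precisely the cost that later shows up in the $k/h + k^d$ trade-off for Tucker histograms.
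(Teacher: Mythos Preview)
Your proposal is correct and follows essentially the same argument as the paper's proof: cover $\Delta_b$ at scale $\varepsilon/(2d)$ for the $dk$ marginals, cover $\sT_{d,k}\cong\Delta_{k^d}$ at scale $\varepsilon/2$ for the core via Lemma~\ref{lem:histcover}, form all combinations, and verify the $\varepsilon$-cover by a triangle-inequality split into a core term and a marginals term (the paper inserts the intermediate point $\sum_S W_S\prod_j \tp_{j,S_j}$ rather than your $\sum_S \tW_S\prod_j p_{j,S_j}$, which is an inconsequential difference).
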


\begin{cor}\label{cor:tuckcover}
  For all $0 < \varepsilon \le 1$ following holds $N\left(\tsH_{d,b}^k,\varepsilon\right)\le \left(\frac{4bd}{\varepsilon} \right)^{bdk} \left( \frac{4k^d}{\varepsilon}\right)^{k^d}.$
\end{cor}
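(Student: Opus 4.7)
The plan is to derive the corollary as an immediate consequence of Lemma~\ref{lem:tuckcover} by transporting an $\varepsilon$-cover through the isometry $U_{d,b}$ from $\tsT_{d,b}^k$ to $\tsH_{d,b}^k$, in exact parallel to how Corollary~\ref{cor:lrcover} follows from Lemma~\ref{lem:lrcover}.

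First, I would invoke Lemma~\ref{lem:tuckcover} to obtain a finite subset $\{T_1,\ldots,T_N\} \subseteq \tsT_{d,b}^k$ with $N \le \left(\frac{4bd}{\varepsilon}\right)^{bdk}\left(\frac{4k^d}{\varepsilon}\right)^{k^d}$ that $\varepsilon$-covers $\tsT_{d,b}^k$ in the $\ell^1$ norm. Second, I would recall from the notation subsection that $U_{d,b}$ is an $\ell^1 \to L^1$ linear isometry and, crucially, that it restricts to a bijection $\tsT_{d,b}^k \to \tsH_{d,b}^k$. Together these two facts mean that for any $f \in \tsH_{d,b}^k$ there is a unique $T \in \tsT_{d,b}^k$ with $U_{d,b}(T) = f$, and for the cover element $T_i$ closest to $T$ in $\ell^1$ we have $\|f - U_{d,b}(T_i)\| = \|U_{d,b}(T - T_i)\| = \|T - T_i\| \le \varepsilon$ by the isometry property.

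Therefore $\{U_{d,b}(T_1),\ldots,U_{d,b}(T_N)\}$ is an $\varepsilon$-cover of $\tsH_{d,b}^k$ in the $L^1$ norm of the same cardinality $N$, which gives exactly the stated bound. There is no real obstacle to overcome here: the work has been done in establishing Lemma~\ref{lem:tuckcover}, and the bijectivity and isometry of $U_{d,b}$ on the Tucker layer, which are both recorded when $U_{d,b}$ is introduced, make this corollary a one-line translation.
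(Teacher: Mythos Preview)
Your proposal is correct and matches the paper's approach exactly: the paper states Corollary~\ref{cor:tuckcover} immediately after Lemma~\ref{lem:tuckcover} as an analogue of Corollary~\ref{cor:lrcover}, obtained ``through application of the $U_{d,b}$ operator,'' i.e., by pushing the $\varepsilon$-cover of $\tsT_{d,b}^k$ through the $\ell^1\to L^1$ isometric bijection $U_{d,b}:\tsT_{d,b}^k\to\tsH_{d,b}^k$.
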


The following lemma from \cite{ashtiani18} provides us with a way to choose good estimators from finite collections of densities. It can be proven by applying a Chernoff bound to \cite{devroye01}, Theorem 6.3.
\begin{lem}[\cite{ashtiani18}] \label{lem:densalg}
  There exists a deterministic algorithm that, given a collection of distributions $p_1,\ldots,p_M$, a parameter $\varepsilon >0$ and at least $\frac{\log \left(3M^2/\delta \right)}{2\varepsilon^2}$ iid samples from an unknown distribution $p$, outputs an index $j\in \left[M\right]$ such that
  \begin{align*} 
    \left\|p_j - p \right\| \le 3 \min_{i \in \left[M\right]} \left\|p_i - p\right\| + 4 \varepsilon
  \end{align*}
  with probability at least $1 - \frac{\delta}{3}$.
\end{lem}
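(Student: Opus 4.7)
The plan is to apply a Chernoff/Hoeffding bound to the minimum-distance ``Scheffé tournament'' estimator of \cite{devroye01}, Theorem 6.3, exactly as the lemma statement hints. Recall the setup: for every unordered pair $\{i,j\} \subset [M]$, define the Scheffé set $A_{ij} = \{x : p_i(x) > p_j(x)\}$, so that $\|p_i - p_j\| = 2\bigl(p_i(A_{ij}) - p_j(A_{ij})\bigr)$ and $|p_i(A) - p(A)| \le \|p_i - p\|$ for any measurable $A$. The deterministic algorithm computes the empirical masses $\widehat{\mu}_n(A_{ij})$ from the samples and returns
\[
\widehat{\jmath} \in \operatorname*{arg\,min}_{i \in [M]} \sup_{j \neq i} \bigl| p_i(A_{ij}) - \widehat{\mu}_n(A_{ij}) \bigr|.
\]

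The first step is to invoke Theorem 6.3 of \cite{devroye01}, which is a purely deterministic statement: if $\Delta := \max_{i<j} \bigl|\widehat{\mu}_n(A_{ij}) - p(A_{ij})\bigr| \le \varepsilon$, then
\[
\|p_{\widehat{\jmath}} - p\| \le 3 \min_{i \in [M]} \|p_i - p\| + 4\varepsilon.
\]
This is the standard Scheffé-tournament triangle inequality and is used verbatim.

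The second step is to control $\Delta$ probabilistically. For each fixed pair $(i,j)$, $n\widehat{\mu}_n(A_{ij})$ is a sum of $n$ i.i.d.\ Bernoulli$(p(A_{ij}))$ trials, so Hoeffding's inequality yields
\[
\Pr\bigl(|\widehat{\mu}_n(A_{ij}) - p(A_{ij})| > \varepsilon\bigr) \le 2 e^{-2n\varepsilon^2}.
\]
A union bound over the $\binom{M}{2} \le M^2/2$ distinct Scheffé sets gives $\Pr(\Delta > \varepsilon) \le M^2 e^{-2n\varepsilon^2}$. Requiring the right-hand side to be at most $\delta/3$ and solving for $n$ produces exactly the stated sample complexity $n \ge \log(3M^2/\delta)/(2\varepsilon^2)$; on the complementary event the deterministic bound from the first step delivers the conclusion.

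The main (mild) obstacle is constant bookkeeping: one must use the $\binom{M}{2}$ distinct Scheffé sets rather than $M^2$ so that the factor $3$ (and not $6$) appears inside the logarithm, and one must verify that the constants $3$ and $4$ in the error bound are the ones extracted from \cite{devroye01}, Theorem 6.3. Both are immediate from the standard statement, so the lemma is essentially a packaging of an existing deterministic guarantee together with one application of Hoeffding's inequality and a union bound.
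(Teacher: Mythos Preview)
Your proposal is correct and matches the paper's own treatment: the paper does not give a proof of this lemma at all, citing it from \cite{ashtiani18} and remarking only that it ``can be proven by applying a Chernoff bound to \cite{devroye01}, Theorem 6.3.'' Your write-up is precisely that argument spelled out, with the constant bookkeeping done correctly.
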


We present the following asymptotic version of the previous lemma and include its full proof. We highlight the use of finding sufficiently slow rates on parameters in order to establish asymptotic results, a technique which we will use in later proofs.

\begin{lem}\label{lem:finiteest}
  Let $\left( \sP_n\right)_{n \in \mathbb{N}}$ be a sequence of finite collections of densities in $\sD_d$ where $\left| \sP_n\right| \to \infty$ with $n/\log\left(\left|\sP_n\right|\right)\to \infty$. Then there exists a sequence of estimators $V_n \in \sP_n$ such that, for all $\gamma >0$,
  \begin{align*}
    \sup_{p \in \sD_d }P\left(\left\|V_n- p\right\| 
    > 3 \min_{q\in \sP_n}\left\|p -q \right\| + \gamma    \right) \to 0,
  \end{align*}
  where $V_n$ is a function of $X_1,\ldots,X_n \simiid p$.
\end{lem}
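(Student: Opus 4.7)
The plan is to invoke Lemma~\ref{lem:densalg} with a sequence of accuracy parameters $(\varepsilon_n,\delta_n)$ that both tend to zero, but slowly enough that the sample-size hypothesis of that lemma is met by the $n$ samples available at stage $n$. The algorithm's output on input $\sP_n$, with these parameters and data $X_1,\ldots,X_n\simiid p$, then serves as $V_n$; for the finitely many initial $n$ at which the sample-size inequality fails I would set $V_n$ to any fixed element of $\sP_n$, which is irrelevant to the asymptotic conclusion.

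Set $a_n \triangleq \log|\sP_n|$, so by hypothesis $a_n \to \infty$ and $a_n/n \to 0$. A concrete workable choice is $\varepsilon_n = (a_n/n)^{1/4}$ and $\delta_n = 1/n$; then $\varepsilon_n\to 0$, $\delta_n\to 0$, and $2n\varepsilon_n^2 = 2\sqrt{na_n}$. The sample-size requirement of Lemma~\ref{lem:densalg} becomes
\begin{equation*}
 \log 3 + 2a_n + \log n \;\le\; 2\sqrt{na_n},
\end{equation*}
which holds for all sufficiently large $n$, because $2a_n/\sqrt{na_n} = 2\sqrt{a_n/n}\to 0$ and, since $a_n$ is eventually bounded below by $1$, $\log n/\sqrt{na_n} \le \log n/\sqrt{n}\to 0$.

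From that point on, Lemma~\ref{lem:densalg} produces for each $n$ a data-dependent estimator $V_n\in\sP_n$ satisfying
\begin{equation*}
 P\bigl(\|V_n - p\| > 3\min_{q\in\sP_n}\|q-p\| + 4\varepsilon_n\bigr) \;\le\; \delta_n/3,
\end{equation*}
and, crucially, this bound is uniform in the true density $p\in\sD_d$ since the guarantee of Lemma~\ref{lem:densalg} does not involve $p$. For any fixed $\gamma>0$, the inequality $4\varepsilon_n<\gamma$ holds for all $n$ past some threshold, so the event $\{\|V_n-p\|>3\min_q\|q-p\|+\gamma\}$ is contained in the event above from that point on, and $\delta_n/3\to 0$ yields the conclusion uniformly in $p$.

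The only real delicacy, and the main thing to get right, is the three-way balancing of $|\sP_n|$, $\varepsilon_n$, and $\delta_n$: the hypothesis $n/\log|\sP_n|\to\infty$ is precisely the budget that lets $\varepsilon_n$ tend to zero while $n\varepsilon_n^2$ still dominates $\log|\sP_n| + \log\delta_n^{-1}$. Any triple satisfying $\varepsilon_n\to 0$, $\delta_n\to 0$, and $n\varepsilon_n^2/(\log|\sP_n|+\log\delta_n^{-1})\to\infty$ would serve equally well; the specific choice above is a convenient one, and the same recipe will resurface in the main theorems, with $|\sP_n|$ replaced by the covering numbers of $\sH_{d,b}^k$ or $\tsH_{d,b}^k$ furnished by Corollaries~\ref{cor:lrcover} and~\ref{cor:tuckcover}.
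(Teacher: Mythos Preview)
Your proposal is correct and follows essentially the same approach as the paper: both arguments invoke Lemma~\ref{lem:densalg} with sequences $\varepsilon_n,\delta_n\to 0$ chosen slowly enough that the sample-size condition $2n\varepsilon_n^2 \ge \log(3|\sP_n|^2/\delta_n)$ is eventually satisfied, and then absorb $4\varepsilon_n$ into the fixed $\gamma$. The only differences are cosmetic---the paper parametrizes via an auxiliary sequence $C(n)\to\infty$ with $n\ge C\log|\sP_n|$ and sets $\varepsilon=(2/C)^{1/4}$, $\delta=3/\exp(2\sqrt{C/2})$, whereas you work directly with $a_n=\log|\sP_n|$ and the simpler choice $\delta_n=1/n$---but the verification is the same in spirit and your version is arguably cleaner.
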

\begin{proof}
  Let $M = M(n)= \left| \sP_n\right|$. Since $n/\log\left(M\right) \to \infty$ we have that for all $c>0$ there exists a $N_c$ such that, for all $n\ge N_c$ we have $n/\log\left(M\right) \ge c $ or equivalently $n\ge c\log\left(M\right)$. Because of this there exists sequence of positive values $C = C(n)$ such that $C \to \infty$ and $n \ge C\log\left(M\right)$.

  We will be making use of the algorithm in Lemma \ref{lem:densalg} as well as its notation. If we can show that there exist sequences of positive values $\varepsilon(n)\to 0, \delta(n)\to 0$ such that, for sufficiently large $n$, the following holds
  \begin{align*}
    \frac{\log \left(3M^2/\delta \right)}{2\varepsilon^2}\le n,
  \end{align*}
  then can simply set $V_n$ equal to be the estimator from Lemma \ref{lem:densalg} for sufficiently large $n$ and, because the lemma holds independent of choice of $p$, the theorem statement follows.

  Let $\varepsilon = \left(2/C \right)^{1/4}$ and $\delta = 3/\left(\exp\left(2 \sqrt{\frac{C}{2}} \right) \right)$. Note that these are both positive sequences which converge to zero. Now we have
  \begin{align}
    \frac{\log\left(3M^2/\delta\right)}{2 \varepsilon^2}\notag 
    &= \frac{\log\left(M^2\right) + \log\left(3/\delta \right)}{2 \varepsilon^2}\\
    & = \frac{2\log\left(M\right) + \log\left(3/\delta \right)}{2 \varepsilon^2}\notag = \frac{\log\left(M\right) + \frac{1}{2}\log\left(3/\delta \right)}{\varepsilon^2}\notag\\
    & = \varepsilon^{-2}\left(\log\left(M\right) + \frac{1}{2}\log\left(3/\delta \right)\right)\notag\\
    &= \left(\left(2/C \right)^{1/4}\right)^{-2}\left(\log\left(M\right) + \frac{1}{2}\log\left(\exp\left(2 \sqrt{\frac{C}{2}} \right) \right) \right)\notag\\
    & = \sqrt{\frac{C}{2}} \left(\log(M)+ \sqrt{\frac{C}{2}} \right) = \sqrt{\frac{C}{2}} \log(M)+ \frac{C}{2}\label{eqn:finiteest}.
  \end{align}
  For sufficiently large $C$ and $M$ we have that the RHS of (\ref{eqn:finiteest}) is less than or equal to
  \begin{align*}
    \frac{C}{2} \log(M)+ \frac{C}{2}
    &\le \frac{C}{2} \log(M)+ \frac{C}{2}\log(M)	\\
    &= C\log(M) \le n.
  \end{align*}
  which completes our proof.
\end{proof}
\subsection{Main Theoretical Results}
We can now state the central results of this paper. The following theorem states that one can control the estimation error of multi-view histograms with $k$ components and $b$ bins per dimension so long as $n \sim bk$ (omitting logarithmic factors). Recall that the standard histogram estimator requires $n\sim b^d$, so we have removed the exponential dependence of bin rate on dimensionality. Here and elsewhere the $\sim$ symbol is not a precise mathematical statement but rather describes that the two values should be of the same order. In the following $b$ and $k$ are functions of $n$; the space of histograms which we are fitting changes as we acquire more data.
\begin{thm}\label{thm:genrate}
  For any pairs of sequences $b \to \infty$ and $k \to \infty$ satisfying $n/(bk\log(b) + k\log(k)) \to \infty$, there exists an estimator $V_n\in \sH_{d,b}^k$ such that, for all $\varepsilon >0$
  \begin{align*}
    &\sup_{p \in \sD_d }P\left(\left\|V_n - p\right\| >
    3 \min_{q\in \sH_{d,b}^k}\left\|p -q \right\| + \varepsilon  \right) \to 0,
  \end{align*}
  where $V_n$ is a function of  $X_1,\ldots, X_n\simiid p$.
\end{thm}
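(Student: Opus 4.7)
The plan is to apply Lemma \ref{lem:finiteest} to a sequence of covers of $\sH_{d,b}^k$ whose sizes are controlled by Corollary \ref{cor:lrcover}, with the cover resolution $\varepsilon_n$ shrinking slowly enough that the cardinality bound still satisfies the growth condition in Lemma \ref{lem:finiteest} but quickly enough that it eventually beats any fixed $\varepsilon > 0$.

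First, I would fix an auxiliary sequence $\varepsilon_n \to 0$ with $\log(1/\varepsilon_n) = o(\log b)$; since $b \to \infty$, $\varepsilon_n = 1/\log n$ is one concrete choice. Let $\sP_n \subseteq \sH_{d,b}^k$ be a minimal $\varepsilon_n$-cover, so that $|\sP_n| = N(\sH_{d,b}^k, \varepsilon_n)$. Corollary \ref{cor:lrcover} then gives
\[
  \log|\sP_n| \le bdk\log(4bd/\varepsilon_n) + k\log(4k/\varepsilon_n),
\]
which for fixed $d$ is $O(bk\log b + k\log k)$ once the $\log(1/\varepsilon_n)$ contribution is absorbed using the chosen slow rate. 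Combined with the hypothesis $n/(bk\log b + k\log k) \to \infty$, this yields $n/\log|\sP_n| \to \infty$, so Lemma \ref{lem:finiteest} applies and produces an estimator $V_n \in \sP_n \subseteq \sH_{d,b}^k$ satisfying, for every $\gamma > 0$,
\[
  \sup_{p \in \sD_d} P\left(\|V_n - p\| > 3\min_{q \in \sP_n}\|p - q\| + \gamma\right) \to 0.
\]

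To finish, I would translate this bound from $\sP_n$ back to $\sH_{d,b}^k$. Because $\sP_n$ is an $\varepsilon_n$-cover of $\sH_{d,b}^k$, the triangle inequality yields $\min_{q \in \sP_n}\|p - q\| \le \min_{q \in \sH_{d,b}^k}\|p - q\| + \varepsilon_n$. For any fixed target $\varepsilon > 0$, apply the displayed bound with $\gamma = \varepsilon/2$ and restrict to $n$ so large that $3\varepsilon_n < \varepsilon/2$; on this tail, the event $\{\|V_n - p\| > 3\min_{q \in \sH_{d,b}^k}\|p - q\| + \varepsilon\}$ is contained in the event whose probability is vanishing uniformly in $p$, so the desired supremum tends to zero as well.

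The main obstacle is the rate bookkeeping: $\varepsilon_n$ must shrink fast enough to disappear from the final bound but slowly enough that $bdk\log(1/\varepsilon_n)$ remains dominated by $bk\log b + k\log k$. Since $b \to \infty$ and $d$ is fixed, any $\varepsilon_n$ with $\log(1/\varepsilon_n) = o(\log b)$ reconciles both constraints, so this step is careful verification rather than a substantive difficulty; the essential mathematical content is already packaged in Corollary \ref{cor:lrcover} and Lemma \ref{lem:finiteest}.
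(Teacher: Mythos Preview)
Your approach is essentially the paper's: build a shrinking $\varepsilon_n$-cover of $\sH_{d,b}^k$ via Corollary~\ref{cor:lrcover}, feed it to Lemma~\ref{lem:finiteest}, and absorb the $\varepsilon_n$ slack by the triangle inequality. The one slip is the concrete choice $\varepsilon_n = 1/\log n$: nothing in the hypotheses forces $\log\log n = o(\log b)$, since $b$ may go to infinity arbitrarily slowly relative to $n$ (e.g.\ $b = \log n$ gives $\log(1/\varepsilon_n)/\log b \to 1$). A choice tied to $b$, such as $\varepsilon_n = 1/\log b$, or the paper's device of first extracting a sequence $C(n)\to\infty$ with $n \ge C(bk\log b + k\log k)$ and then setting $\varepsilon_n$ in terms of $C$, fixes this without changing the structure of your argument.
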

\begin{proof}[Proof sketch]
  Apply Lemma \ref{lem:finiteest} to the cover in Corollary \ref{cor:lrcover} and choose appropriately slow rates for terms not involving $b$ or $k$.
\end{proof}
The sample complexity for the multi-view histogram is perhaps more accurately approximated as being on the order of $dbk$ however the $d$ disappears in the asymptotic analysis.

The following theorem states that we can control the error of Tucker histogram estimates so long as $n\sim bk+k^d$ (omitting logarithmic factors).
\begin{thm}\label{thm:tuckrate}
  For any pairs of sequences $b \to \infty$ and $k \to \infty$ satisfying $n/(bk\log(b) + k^d\log\left(k^d\right)) \to \infty$, there exists an estimator $V_n\in \tsH_{d,b}^k$ such that, for all $\varepsilon >0$
  \begin{align*}
    &\sup_{p \in \sD_d }P\left(\left\|V_n - p\right\| >
    3 \min_{q\in \tsH_{d,b}^k}\left\|p -q \right\| + \varepsilon  \right) \to 0,
  \end{align*}
  where $V_n$ is a function of $X_1,\ldots, X_n\simiid p$.
\end{thm}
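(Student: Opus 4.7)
The plan is to mirror the proof sketch of Theorem \ref{thm:genrate}, substituting Corollary \ref{cor:tuckcover} for Corollary \ref{cor:lrcover} so as to accommodate the extra Tucker factor $(4k^d/\varepsilon)^{k^d}$. Specifically, I would choose a sequence $\varepsilon_n \to 0$ slowly and let $\sP_n \subseteq \tsH_{d,b}^k$ be a minimal $\varepsilon_n$-cover, whose cardinality Corollary \ref{cor:tuckcover} bounds by $(4bd/\varepsilon_n)^{bdk}(4k^d/\varepsilon_n)^{k^d}$. I then feed $\sP_n$ into Lemma \ref{lem:finiteest}, which requires $n/\log\left|\sP_n\right|\to\infty$, and translate the resulting guarantee from $\sP_n$ back to $\tsH_{d,b}^k$ via the triangle inequality.

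Verifying the hypothesis of Lemma \ref{lem:finiteest} is the only calculation. Taking logarithms of the cover size gives
\begin{align*}
\log\left|\sP_n\right| \le bdk\log(4bd/\varepsilon_n) + k^d\log(4k^d/\varepsilon_n).
\end{align*}
Treating $d$ as a fixed constant, the $\log(4bd)$ and $\log(4k^d)$ contributions yield terms of order $bk\log b$ and $k^d\log(k^d)$, both $o(n)$ by the rate assumption. For the remaining $\log(1/\varepsilon_n)$ contributions I pick $\varepsilon_n\to 0$ slowly enough that $\log(1/\varepsilon_n) = o(\min(\log b,\log k))$, which is feasible because $b,k\to\infty$; this forces $bdk\log(1/\varepsilon_n) = o(bk\log b)$ and $k^d\log(1/\varepsilon_n) = o(k^d\log k)$, so $n/\log\left|\sP_n\right|\to\infty$ as required.

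Given this, Lemma \ref{lem:finiteest} produces $V_n\in\sP_n$ such that, for every $\gamma>0$, $\left\|V_n - p\right\| \le 3\min_{q\in\sP_n}\left\|p-q\right\| + \gamma$ uniformly in $p\in\sD_d$ with probability tending to one. Since $\sP_n$ is an $\varepsilon_n$-cover of $\tsH_{d,b}^k$, the triangle inequality gives $\min_{q\in\sP_n}\left\|p-q\right\| \le \min_{q\in\tsH_{d,b}^k}\left\|p-q\right\| + \varepsilon_n$, so the total slack is $3\varepsilon_n + \gamma$, which can be made less than any prescribed $\varepsilon>0$ for sufficiently large $n$. There is no genuine obstacle: once Corollary \ref{cor:tuckcover} is available, matching it to the stated rate hypothesis is the same ``appropriately slow rates'' bookkeeping already illustrated in the proof of Lemma \ref{lem:finiteest}, with the only substantive change being that the mixture-weight factor now lives in $\sT_{d,k}$ rather than $\Delta_k$ and so contributes $k^d\log(k^d)$ in place of $k\log k$.
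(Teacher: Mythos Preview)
Your proposal is correct and follows essentially the same route as the paper: apply Corollary~\ref{cor:tuckcover} to build a shrinking-radius cover of $\tsH_{d,b}^k$, verify that the rate hypothesis forces $n/\log|\sP_n|\to\infty$ by choosing the radius to decay slowly, invoke Lemma~\ref{lem:finiteest}, and absorb the cover radius into the slack via the triangle inequality. The only cosmetic difference is that the paper bounds the log-cover-size by the product $(bk\log b + k^d\log(k^d))\,d(1+\log(4d/\delta))$ and then picks $\delta$ explicitly as a function of the sequence $C(n)$ witnessing the rate, whereas you argue more abstractly that $\log(1/\varepsilon_n)=o(\min(\log b,\log k))$ suffices; both choices accomplish the same thing.
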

Allowing $b$ to grow as aggressively as possible we achieve consistent estimation, using either the multi-view or Tucker histograms, so long as $n \sim b\log b $ regardless of dimensionality.
\begin{cor}\label{cor:fastbin}
  \sloppy For all $d,b,k$ fix  $\sR_{d,b}^k$ to be either $\sH_{d,b}^k$ or $\tsH_{d,b}^k$. For any sequence $b \to \infty$ with $n/\left(b\log b\right) \to \infty$, there exists a sequence $k \to \infty$ and estimator $V_n\in \sR_{d,b}^k$ such that, for all $\varepsilon >0$
  \begin{align*}
    \sup_{p \in \sD_d }P\left(\left\|V_n - p\right\| 
   > 3 \min_{q\in \sR_{d,b}^k}\left\|p -q \right\| + \varepsilon  \right) \to 0,
  \end{align*}
  where $V_n$ is a function of  $X_1,\ldots, X_n\simiid p$.
\end{cor}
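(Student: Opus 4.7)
The plan is to prove Corollary \ref{cor:fastbin} by a direct reduction to Theorem \ref{thm:genrate} (in the multi-view case) or Theorem \ref{thm:tuckrate} (in the Tucker case), with $k$ chosen as a diagonal sequence that tends to infinity slowly enough to satisfy the hypotheses of the chosen theorem. Define the auxiliary sequence $C_n \triangleq n/(b\log b)$, which by hypothesis diverges. The goal is then to produce a $k \to \infty$ such that the respective sample-complexity ratio of the theorem tends to infinity, and to quote the theorem verbatim.

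For the multi-view case, I would note that the hypothesis $n/(bk\log(b)+k\log(k))\to\infty$ of Theorem \ref{thm:genrate} is equivalent (after dividing the denominator by $b\log b$) to $k/C_n\to 0$, since the $k\log k$ term is dominated by $bk\log b$ once $b$ is large. Any $k\to\infty$ with $k=o(C_n)$ therefore suffices, e.g.\ $k(n)=\lfloor\sqrt{C_n}\rfloor$, which clearly tends to infinity since $C_n\to\infty$. Theorem \ref{thm:genrate} then yields the desired $V_n\in\sH_{d,b}^k$.

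For the Tucker case I have the additional requirement $k^d\log(k^d)=o(n)$ coming from Theorem \ref{thm:tuckrate}. Since $d$ is fixed, any sufficiently slowly growing $k$ will work; the concrete choice $k(n)=\lfloor\log C_n\rfloor$ (for $n$ large enough that this is at least $2$) gives $k\to\infty$, $k=o(C_n)$, and
\begin{align*}
k^d\log(k^d)=O\bigl((\log C_n)^d\log\log C_n\bigr)=o(C_n)\le o(n),
\end{align*}
while $bk\log b=(\log C_n)\cdot b\log b=o(C_n\cdot b\log b)=o(n)$. Both terms in the denominator of Theorem \ref{thm:tuckrate} are thus dominated by $n$, so the theorem applies and produces $V_n\in\tsH_{d,b}^k$.

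There is no substantive obstacle here; the only point that needs a little care is ensuring a single integer-valued $k$ simultaneously (i) tends to $\infty$, (ii) keeps $bk\log b=o(n)$, and (iii) in the Tucker case, keeps $k^d\log k=o(n)$. The diagonal choice $k=\lfloor\log C_n\rfloor$ handles all three conditions at once, so the corollary follows immediately by invoking the appropriate theorem for each of the two choices of $\sR_{d,b}^k$.
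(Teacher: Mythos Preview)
Your approach is exactly what the paper intends: the corollary is stated without proof, to be read off from Theorems~\ref{thm:genrate} and~\ref{thm:tuckrate} by choosing $k$ to grow slowly enough, and your diagonal choices do the job.

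One small correction in the multi-view case: the assertion that ``the $k\log k$ term is dominated by $bk\log b$ once $b$ is large'' is not valid in general, because $b$ may grow arbitrarily slowly while your $k=\lfloor\sqrt{C_n}\rfloor$ can be far larger than $b$ (e.g.\ if $b=\log\log n$ then $k\approx\sqrt{n}$, and $k\log k\gg bk\log b$). This does not actually break your argument: since $b\to\infty$ forces $b\log b\ge 1$ eventually, we have $n\ge C_n$, and therefore
\[
\frac{n}{k\log k}\;\ge\;\frac{C_n}{\sqrt{C_n}\cdot\tfrac12\log C_n}\;=\;\frac{2\sqrt{C_n}}{\log C_n}\;\to\;\infty,
\]
so both terms in the denominator of Theorem~\ref{thm:genrate} are $o(n)$ for your choice of $k$. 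Just replace the claimed ``equivalence'' with this direct verification and the proof is complete.
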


Replacing $b:= 1/h$ allows us to arrive at the rates mentioned in Section \ref{sec:introduction}.
The following result shows that the bias of the estimators in Theorems \ref{thm:genrate} and $\ref{thm:tuckrate}$ go to zero for all densities. Thus these estimators are universally consistent even when the NNTF model assumption is not satisfied.
\begin{lem}\label{lem:lrbias}
  Let $p \in \sD_d$. If $k\to \infty$ and $b \to \infty$ then $\min_{q\in \sH_{d,b}^k} \left\|p - q\right\| \to 0$.
\end{lem}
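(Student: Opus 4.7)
The plan is a two-stage approximation. First, by the Lebesgue differentiation theorem, for any $p \in \sD_d$ the population histogram
\begin{equation*}
  g_{b_0} := \sum_{A \in [b_0]^d} p(\Lambda_{d,b_0,A})\, h_{d,b_0,A}
\end{equation*}
converges to $p$ in $L^1$ as $b_0 \to \infty$. So for a given $\varepsilon > 0$, I would first fix $b_0$ large enough that $\|g_{b_0} - p\| < \varepsilon/2$. Since $h_{d,b_0,A} = \prod_{i=1}^d h_{1,b_0,A_i}$, the density $g_{b_0}$ is already a convex combination of $b_0^d$ separable densities; if $b_0 \mid b$ and $k \ge b_0^d$ it would lie in $\sH_{d,b}^k$ directly, but we need to remove these special assumptions.

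The second stage realizes $g_{b_0}$ approximately inside $\sH_{d,b}^k$ for arbitrary $b$ with no divisibility assumption. For each $j \in [b_0]$ I would define the 1-D histogram
\begin{equation*}
  \tilde h^{(b)}_j := \sum_{i=1}^b \left( \int_{(i-1)/b}^{i/b} h_{1,b_0,j}(y)\,dy \right) h_{1,b,i} \in \sH_{1,b},
\end{equation*}
which agrees with $h_{1,b_0,j}$ exactly on every $b$-bin lying entirely inside or entirely outside the support of $h_{1,b_0,j}$, and differs only on the at most two boundary $b$-bins. A direct calculation on those boundary bins gives a uniform bound $\|\tilde h^{(b)}_j - h_{1,b_0,j}\| = O(b_0/b)$. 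The product-measure inequality $\|\prod_i f_i - \prod_i g_i\| \le \sum_i \|f_i - g_i\|$ cited in the proof of Lemma \ref{lem:simplecover} then yields $\|\prod_{i=1}^d \tilde h^{(b)}_{A_i} - h_{d,b_0,A}\| = O(d b_0/b)$, uniformly in $A \in [b_0]^d$.

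Putting these pieces together, I set
\begin{equation*}
  \tilde g_b := \sum_{A \in [b_0]^d} p(\Lambda_{d,b_0,A}) \prod_{i=1}^d \tilde h^{(b)}_{A_i} \in \sH_{d,b}^{b_0^d}.
\end{equation*}
Because the weights $p(\Lambda_{d,b_0,A})$ sum to one, the triangle inequality gives $\|\tilde g_b - g_{b_0}\| = O(d b_0/b) \to 0$ as $b \to \infty$ with $b_0$ fixed. Since $k \to \infty$, eventually $k \ge b_0^d$ and thus $\tilde g_b \in \sH_{d,b}^k$, so for all sufficiently large $b$ and $k$ we obtain $\min_{q \in \sH_{d,b}^k} \|p - q\| \le \|\tilde g_b - g_{b_0}\| + \|g_{b_0} - p\| < \varepsilon$. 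Letting $\varepsilon$ be arbitrary completes the proof.

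The main obstacle is that $\sH_{d,b_0}$ does not embed into $\sH_{d,b}$ for an arbitrary sequence $b \to \infty$ with no divisibility relation to $b_0$, so one cannot use the fixed-grid population histogram directly. The key idea that removes this obstacle is that each 1-D atom $h_{1,b_0,j}$ is itself $L^1$-approximable by elements of $\sH_{1,b}$ with an error $O(b_0/b)$ depending only on the fixed $b_0$, and the separable structure propagates this 1-D bound to a $d$-dimensional bound through the standard product inequality.
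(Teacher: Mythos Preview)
Your proposal is correct and follows essentially the same two-stage approximation as the paper: first approximate $p$ by a coarse histogram on a fixed grid $b_0$ (the paper's $B$), then approximate each one-dimensional atom $h_{1,b_0,j}$ inside $\sH_{1,b}$ and lift to $d$ dimensions via the product-measure inequality, using $k \ge b_0^d$ eventually. The only cosmetic difference is that the paper invokes the general histogram-consistency theorem a second time for the one-dimensional step, whereas you give the explicit projection $\tilde h^{(b)}_j$ with the quantitative bound $O(b_0/b)$; both routes yield the same conclusion.
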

A straightforward consequence of this is that the Tucker histogram bias also goes to zero.
\begin{lem}\label{lem:tuckbias}
  Let $p \in \sD_d$. If $k\to \infty$ and $b \to \infty$ then $\min_{q\in \tsH_{d,b}^k} \left\|p - q\right\| \to 0$.
\end{lem}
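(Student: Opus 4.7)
The plan is to reduce the Tucker-histogram bias statement directly to the multi-view bias statement (Lemma~\ref{lem:lrbias}) by showing the set inclusion $\sH_{d,b}^k \subseteq \tsH_{d,b}^k$. Once this inclusion is in hand, the infimum of $\|p-q\|$ over the larger set is bounded above by the infimum over the smaller set, and the conclusion is immediate from Lemma~\ref{lem:lrbias}.

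To establish the inclusion, I would take an arbitrary $\sum_{i=1}^k w_i \prod_{j=1}^d f_{i,j} \in \sH_{d,b}^k$ and explicitly construct an equal element of $\tsH_{d,b}^k$. The idea is to use a diagonal core tensor: define a Tucker core $W \in \sT_{d,k}$ by $W_{(i,i,\ldots,i)} = w_i$ for $i \in [k]$ and $W_S = 0$ for every non-diagonal multi-index $S \in [k]^{\times d}$. Then $W$ is nonnegative with entries summing to one, so $W \in \sT_{d,k}$. For the one-dimensional factors, set $g_{j,i} \triangleq f_{i,j} \in \sH_{1,b}$ for each $j \in [d]$ and $i \in [k]$, which gives $d$ collections of $k$ one-dimensional histograms. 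Expanding the Tucker expression,
\begin{equation*}
  \sum_{S \in [k]^{\times d}} W_S \prod_{j=1}^d g_{j,S_j} = \sum_{i=1}^k w_i \prod_{j=1}^d g_{j,i} = \sum_{i=1}^k w_i \prod_{j=1}^d f_{i,j},
\end{equation*}
so the chosen multi-view histogram lies in $\tsH_{d,b}^k$.

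With the inclusion $\sH_{d,b}^k \subseteq \tsH_{d,b}^k$ established, we have for every $p \in \sD_d$,
\begin{equation*}
  \min_{q \in \tsH_{d,b}^k} \|p - q\| \;\le\; \min_{q \in \sH_{d,b}^k} \|p - q\|,
\end{equation*}
and the right-hand side tends to zero as $k, b \to \infty$ by Lemma~\ref{lem:lrbias}, yielding the claim. There is no real obstacle here; the only point requiring a moment's care is the bookkeeping of indices when matching the multi-view parametrization (component index $i$, coordinate index $j$) to the Tucker parametrization (coordinate index $i$, basis index $j$), which the relabeling $g_{j,i}=f_{i,j}$ handles cleanly.
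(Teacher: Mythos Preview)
Your proposal is correct and follows essentially the same approach as the paper: the paper also proves the inclusion $\sH_{d,b}^k \subset \tsH_{d,b}^k$ by placing the mixture weights on the diagonal of the core tensor $W$ and relabeling $\tf_{j,i}=f_{i,j}$, then invokes Lemma~\ref{lem:lrbias}.
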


Finally we have that rate on $bk$ in Theorem \ref{thm:genrate} cannot be made significantly faster.
\begin{thm}\label{thm:lower}
	Let $d \ge 2$, $b\to \infty$, and $k\to \infty$ with $b \ge k$ and $n/\left(bk\right) \to 0$. There exists no estimator $V_n\in \sH_{d,b}^k$ such that, for all $\varepsilon >0$, the following limit holds
	\begin{align*}
		\sup_{p \in \sD_d }P\left(\left\|V_n - p\right\| > 3 \min_{q\in \sH_{d,b}^k}\left\|p -q \right\| + \varepsilon  \right) \to 0
	\end{align*}
  where $V_n$ is a function of  $X_1,\ldots, X_n\simiid p$.
\end{thm}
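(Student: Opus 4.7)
The plan is to prove the lower bound by Fano's method applied to a carefully chosen packing inside $\sH_{d,b}^k$ itself. Since $\sH_{d,b}^k \subseteq \sD_d$ and $\min_{q\in\sH_{d,b}^k}\|p-q\|=0$ whenever $p\in\sH_{d,b}^k$, the hypothetical guarantee reduces to $\sup_{p\in\sH_{d,b}^k} P(\|V_n-p\|>\varepsilon)\to 0$, so it suffices to exhibit an exponentially large packing of $\sH_{d,b}^k$ that is bounded below in $L^1$ and above in KL strongly enough that Fano rules out any decoder. First I reduce to $d=2$: tensoring any $p\in\sH_{2,b}^k$ with the uniform density on $[0,1)^{d-2}$ (itself in $\sH_{1,b}$) places the result in $\sH_{d,b}^k$ while preserving both $\|\cdot\|$ and KL by Fubini.

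To build the packing, I attach to each view index $i\in[k]$ a sign vector $\sigma_i\in\{-1,+1\}^{\lfloor b/2\rfloor}$ and define the bin-constant one-dimensional density
\[
h_i^{\sigma_i}(x) = 1 + \alpha \sum_{j=1}^{\lfloor b/2\rfloor} \sigma_{i,j}\,\psi_j(x), \qquad \psi_j \triangleq \1_{[(2j-2)/b,(2j-1)/b)} - \1_{[(2j-1)/b,2j/b)},
\]
with $\alpha\in(0,1)$ a small absolute constant. Each $h_i^{\sigma_i}$ is nonnegative, integrates to one, and is piecewise constant on the $b$ bins of $\sH_{1,b}$. Using $b\ge k$ so that the singleton-bin densities $h_{1,b,1},\ldots,h_{1,b,k}$ are available, set
\[
p_\sigma(x_1,x_2) \triangleq \sum_{i=1}^k \tfrac{1}{k}\,h_i^{\sigma_i}(x_1)\,h_{1,b,i}(x_2) \ \in\ \sH_{2,b}^k,
\]
placing the $k$ multi-view components on disjoint $x_2$-strips. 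Exploiting disjointness of the strip supports in the $L^1$ computation and of the $\psi_j$ supports in the chi-square upper bound for KL (using $h_i^{\sigma'_i}\ge 1-\alpha$), a direct calculation yields
\[
\|p_\sigma - p_{\sigma'}\| = \tfrac{4\alpha}{bk}\,d_H(\sigma,\sigma'), \qquad \mathrm{KL}(p_\sigma\|p_{\sigma'}) \le \tfrac{8\alpha^2}{(1-\alpha)\,bk}\,d_H(\sigma,\sigma'),
\]
where $d_H$ is Hamming distance on $\{-1,+1\}^{k\lfloor b/2\rfloor}$. By Gilbert--Varshamov I extract $\Sigma$ with $|\Sigma|\ge 2^{kb/16}$ and minimum Hamming distance $\ge kb/8$, yielding pairwise $L^1$ separation $\ge \alpha/2$ and a uniform KL bound $\le 4\alpha^2/(1-\alpha) =: c_\alpha$.

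Suppose a valid $V_n$ existed. Setting $\varepsilon := \alpha/6$ and letting $\hat\sigma$ denote the $\tau\in\Sigma$ minimizing $\|V_n - p_\tau\|$, the triangle inequality (since $\alpha/2-\alpha/6>\varepsilon$) forces $\hat\sigma=\sigma$ whenever $\|V_n - p_\sigma\|\le\varepsilon$, so $\sup_{\sigma\in\Sigma}P_\sigma(\hat\sigma\ne\sigma)\to 0$. But Fano's inequality applied to the $n$-fold products gives
\[
\sup_{\sigma\in\Sigma}P_\sigma(\hat\sigma\ne\sigma) \ge 1 - \frac{nc_\alpha + \log 2}{\log|\Sigma|} \ge 1 - \frac{16(nc_\alpha+\log 2)}{bk\log 2},
\]
which tends to $1$ under the hypothesis $n/(bk)\to 0$, the desired contradiction. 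The main obstacle is producing a packing that simultaneously (i) lies in the rank-constrained class $\sH_{d,b}^k$, (ii) has cardinality $\exp(\Omega(bk))$, (iii) keeps pairwise $L^1$ separation bounded away from zero, and (iv) has bounded maximum pairwise KL. The multi-view ansatz above is designed so that (i) is automatic and the $L^1$ and KL estimates decouple linearly across views; placing independent sign-pattern perturbations on disjoint $x_2$-strips is precisely what makes the Gilbert--Varshamov trade-off close out with constant rather than decaying separation.
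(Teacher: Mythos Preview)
Your proof is correct and takes a genuinely different route from the paper. Both arguments exploit the same structural fact: since $b\ge k$, one can place the $k$ multi-view components on disjoint strips in one coordinate and let each component carry $\Theta(b)$ independent degrees of freedom in another coordinate, so that $\sH_{d,b}^k$ embeds a problem of ``size'' $\Theta(bk)$. The paper then \emph{reduces} to categorical estimation: it embeds an arbitrary distribution on $[b]\times[k]$ as a density in $\sH_{d,b}^k$, observes that the hypothetical $V_n$ would yield a consistent estimator of that categorical distribution, and invokes the minimax lower bound of Han, Jiao, and Weissman (2015) for discrete distribution estimation, which is impossible when $n/(bk)\to 0$. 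You instead work \emph{intrinsically}: you build an explicit $2^{\Omega(bk)}$-sized packing inside $\sH_{d,b}^k$ via sign-pattern perturbations and Gilbert--Varshamov, verify constant $L^1$ separation and bounded pairwise KL, and close with Fano. Your argument is more self-contained (no external minimax result needed beyond Fano and GV), while the paper's is shorter once the cited result is granted; your construction also makes the constant-KL/constant-$L^1$ trade-off completely explicit, which is pedagogically nice. The exact GV constants you quote ($2^{kb/16}$, distance $kb/8$) are slightly loose for small $b$, but since only $b\to\infty$ matters this is harmless.
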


\begin{proof}[Proof sketch]
  We use $V_n$ to construct an estimator over $\Delta_b$ and show that such an estimator is impossible using a result in \cite{han15}.
\end{proof}

Likewise the rate on $bk+k^d$ can also not be significantly improved in Theorem \ref{thm:tuckrate}.

\begin{thm}\label{thm:tucklower}
	Let $d \ge 2$, $b\to \infty$, and $k\to \infty$ with $b \ge k$ and $n/\left(bk+k^d\right) \to 0$. There exists no estimator $V_n\in \tsH_{d,b}^k$ such that, for all $\varepsilon >0$, the following limit holds
	\begin{align*}
		\sup_{p \in \sD_d }P\left(\left\|V_n - p\right\| > 3 \min_{q\in \tsH_{d,b}^k}\left\|p -q \right\| + \varepsilon  \right) \to 0
	\end{align*}
  where $V_n$ is a function of  $X_1,\ldots, X_n\simiid p$.
\end{thm}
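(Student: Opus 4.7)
The plan is to split into two regimes depending on which term dominates $bk + k^d$, and exhibit a hard subfamily of $\tsH_{d,b}^k$ with the matching number of effective parameters in each. Because $bk + k^d \le 2\max(bk, k^d)$, the hypothesis $n/(bk + k^d) \to 0$ implies, after passing to a subsequence, that either $n/(bk) \to 0$ or $n/(k^d) \to 0$; ruling out the theorem's uniform convergence along either subsequence yields the desired contradiction.

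In the first regime I would reuse the argument behind Theorem \ref{thm:lower} essentially verbatim. That proof fixes a family of targets already lying in $\sH_{d,b}^k \subseteq \tsH_{d,b}^k$ and transforms $V_n$ into an $\ell^1$ estimator over $\Delta_b$, whose impossibility is supplied by \cite{han15}. The reduction only uses $V_n$ as a measurable function of the sample, so whether $V_n$ is constrained to $\sH_{d,b}^k$ or to the larger $\tsH_{d,b}^k$ is immaterial, and the bias term $\min_{q \in \tsH_{d,b}^k}\|p - q\|$ still vanishes on this family since its targets are already Tucker histograms.

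In the second regime I would build a new hard family parametrised by the Tucker core. Using $b \ge k$, set $g_j = h_{1,b,j}$ for $j \in [k]$ and, for each $W \in \sT_{d,k}$, define
\begin{align*}
  p_W \triangleq \sum_{S \in [k]^{\times d}} W_S \prod_{i=1}^d g_{S_i} \;=\; \sum_{S \in [k]^{\times d}} W_S\, h_{d,b,S} \;\in\; \tsH_{d,b}^k.
\end{align*}
The bins $\Lambda_{d,b,S}$ with $S \in [k]^{\times d}$ are pairwise disjoint and $p_W$ is uniform on each such bin with total mass $W_S$, so $W \mapsto p_W$ is an $\ell^1 \to L^1$ isometry from $\sT_{d,k} \cong \Delta_{k^d}$ into $\tsH_{d,b}^k$, and mapping each sample point to the bin in which it lands converts i.i.d.\ samples from $p_W$ into i.i.d.\ samples from the discrete distribution $W$ on $k^d$ atoms. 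Given any candidate $V_n \in \tsH_{d,b}^k$, the bin integrals $(\widehat W_n)_S \triangleq \int_{\Lambda_{d,b,S}} V_n$ yield an estimator of $W$ with $\|\widehat W_n - W\| \le \|V_n - p_W\|$, optionally renormalised to land in $\Delta_{k^d}$ at the cost of a factor of two. A uniform consistency statement for $V_n$ on this zero-bias family would therefore promote $\widehat W_n$ into a consistent $\ell^1$ estimator of arbitrary distributions on $k^d$ atoms from $n$ samples, contradicting the minimax lower bound of \cite{han15} when $n/k^d \to 0$.

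The main obstacle lies in the second regime, where one must invoke the precise quantitative form of \cite{han15} to convert $n/k^d \to 0$ into a lower bound on $\inf_{\widehat W_n}\sup_W P(\|\widehat W_n - W\| > \varepsilon)$ that is bounded away from zero for some fixed $\varepsilon > 0$ along the chosen subsequence. This is a standard Le Cam or Fano packaging step, but it is the only piece that goes beyond directly quoting Theorem \ref{thm:lower} and constructing the tensor-valued embedding above.
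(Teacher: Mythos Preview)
Your proposal is correct and follows essentially the same route as the paper: pass to a subsequence on which either $n/(bk)\to 0$ or $n/k^d\to 0$, handle the first case by invoking the proof of Theorem \ref{thm:lower} via the inclusion $\sH_{d,b}^k\subset\tsH_{d,b}^k$, and handle the second by embedding $\sT_{d,k}\cong\Delta_{k^d}$ into $\tsH_{d,b}^k$ through $W\mapsto\sum_{S\in[k]^d}W_S\,h_{d,b,S}$ and reducing to the discrete minimax lower bound of \cite{han15}. The only slip is descriptive---Theorem \ref{thm:lower} reduces to estimation over $\Delta_{bk}$, not $\Delta_b$---and the ``obstacle'' you flag is handled in the paper exactly as you anticipate, by reusing the quantitative \cite{han15} bound already deployed in the proof of Theorem \ref{thm:lower}.
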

\subsection{Discussion} \label{ssec:discussion}
  Naturally real world data likely never exactly satisfies the NNTF model assumption. Our results are meant to highlight a trade-off between model assumptions of smoothness (low $b$) and simple dependence between features (low $k$). Here we will explore this trade-off for multi-view histogram. Letting $k = b^d$ gives $\sH_{d,b}^k = \sH_{d,b}$ since we can allocate one component to each bin. Using the estimator from Theorem \ref{thm:genrate} gives a sample complexity of approximately $b^{d+1}$. Thus setting $k=b^d$ in the multi-view histogram gives us something which behaves similarly to the standard histogram estimator with a similar sample complexity. On the other hand setting $k=1$ gives a naive Bayes model with a sample complexity of approximately $b$. The Tucker histogram can be similarly analyzed with $k=b$ corresponding to the standard histogram. Thus we have a span of $k$ yielding different estimators with maximal $k$ corresponding to the standard histogram and minimal $k$ corresponding to a naive Bayes assumption. We observe in Section \ref{sec:experiments} that this trade-off is useful in practice: we virtually never want $k$ to be maximized.

\section{Experiments}\label{sec:experiments}
While Theorems \ref{thm:genrate} and \ref{thm:tuckrate} guarantee the existence of estimators which can effectively estimate NNTF models, these estimators are unfortunately computationally intractable. Fortunately there exist estimators which can be adapted to our problem setting, though they lack the theoretical guarantees of the algorithm described in Theorems \ref{thm:genrate} and \ref{thm:tuckrate}. Specifically we will utilize an existing algorithm for nonnegative tensor decompositions. Due to the difficulties of estimating $L^1$ distances between densities we will instead focus on estimates minimizing the $L^2$ norm. In this section inner products and norms will be $L^2$ for functions and $\ell^2$ for tensors i.e. standard euclidean norm or inner product applied to flattened tensors. We will again restrict our analysis to densities supported on $\left[0,1\right)^d$.

Consider the problem of finding some density estimator $\hat{p}$ with minimal $L_2$ distance to an unknown density $p$. This is equivalent to minimizing the squared $L^2$ loss:
\begin{align}
  &\int_{\left[0,1\right]^d} \left(p(x) - \hat{p}\left(x\right) \right)^2dx\\
  &\quad= \int_{\left[0,1\right]^d}\hat{p}\left(x\right)^2 dx - 2\int_{\left[0,1\right]^d} p(y)\hat{p}(y) dy
  + \int_{\left[0,1\right]^d} p(z)^2 dz.\label{eqn:pnorm}
\end{align}
Because the right term in (\ref{eqn:pnorm}) does not depend on $\hat{p}$ it can be ignored when finding optimal $\hat{p}$. The left term in (\ref{eqn:pnorm}) is known. The middle term in can be estimated with the following approximation
\begin{equation*}
  \int_{\left[0,1\right]^d} p(x)\hat{p}(x) dx = \E_{X\sim p}\left[ \hat{p}(X) \right] \approx \frac{1}{n}\sum_{i=1}^n \hat{p}\left(X_i\right)
\end{equation*}
\sloppy where $\sX = X_1,\ldots, X_n \simiid p$. We can use this to find a good estimate for $p$ in $\sR^k_{d,b}$ which represents $\sH^k_{d,b}$ or $\tsH^k_{d,b}$:

\begin{align}
  &\arg \min_{\hat{H} \in \sR_{d,b}^k} \int_{\left[0,1\right]^d} \left(\hat{H}\left(x\right) - \hat{p}\left(x\right) \right)^2dx\notag\\
  &= \arg \min_{\hat{H} \in \sR_{d,b}^k} \left<\hat{H},\hat{H}\right> - 2\int_{\left[0,1\right]^d} \hat{H}(x)p(x) dx\\
  &\approx \arg \min_{\hat{H} \in \sR_{d,b}^k} \left<\hat{H},\hat{H}\right> - 2\frac{1}{n}\sum_{i=1}^n \hat{H}(X_i). \label{eqn:l2obj}
\end{align}

\sloppy Recall that the standard histogram estimator is $H\left(\sX\right) = \frac{1}{n}\sum_{i=1}^n\sum_{A \in \left[b\right]^d}  h_{d,b,A} \1\left(X_i \in \Lambda_{d,b,A}\right)$ and let $\hat{H} = \sum_{A\in \left[b\right]^d}\hat{w}_A h_{d,b,A}$.
We have the following
\begin{align*}
  \left<\hat{H},H\right>
  & =  \left<\sum_{A \in \left[b\right]^d} \hat{w}_A h_{d,b,A},
  \frac{1}{n}\sum_{i=1}^n \sum_{B\in \left[b\right]^d} h_{d,b,B} \1\left(X_i \in \Lambda_{d,b,B}\right) \right>\\
  & =  \frac{1}{n}\sum_{i=1}^n \sum_{A\in \left[b\right]^d}  \hat{w}_A \1\left(X_i \in \Lambda_B\right)b^d =  \frac{1}{n}\sum_{i=1}^n \hat{H}(X_i).
\end{align*}
As a consequence (\ref{eqn:l2obj}) is equal to
\begin{align*}
  \arg \min_{\hat{H} \in \sR_{d,b}^k}\left<\hat{H},\hat{H}\right> -2\left<H,\hat{H}\right>
  &=\arg \min_{\hat{H} \in \sR_{d,b}^k}\left<\hat{H},\hat{H}\right> -2\left<H,\hat{H}\right> + \left<H,H \right>\\
  &= \arg \min_{\hat{H} \in \sR_{d,b}^k}\left\|H-\hat{H}\right\|_2^2. 
\end{align*}
Using the $U_{d,b}$ operator we can reformulate this into a tensor factorization problem
\begin{equation*}
  \min_{\hat{T} \in \sQ_{d,b}^k}\left\|H-U_{d,b}(\hat{T})\right\|_2^2
  = \min_{\hat{T} \in \sQ_{d,b}^k}b^d\left\|U_{d,b}^{-1}\left(H\right)-\hat{T}\right\|_2^2.
\end{equation*}
Where $\sQ_{d,b}^k$ could be either $\sT_{d,b}^k$ or $\tsT_{d,b}^k$. Because of this equivalence, to find estimates in $\sH_{d,b}^k$ or $\tsH_{d,b}^k$ we can simply use nonnegative tensor decomposition algorithms, which minimize $\ell^2$ loss, to find NNTF tensors which approximate $H$.
\begin{table*}[h]
  \scriptsize
  \label{tab:results}
  \centering
  \caption{Experimental Results}
  \begin{tabular}{|c| *{8}{c|}}
    \hline
    Dataset &$d$ Red.&Dim.& Hist. Perf. & Tucker Perf. & Hist. Bins & Tucker Bins &Tucker $r$ & p-val.\\
    \hline \hline
    \multirow{8}{*}{MNIST}
    &
    \multirow{4}{*}{PCA}
    &2&-1.455$\pm$0.089&-1.502$\pm$0.102&6.531$\pm$1.499&8.375$\pm$1.780&4.968$\pm$1.976&5e-4\\
    &&3&-2.040$\pm$0.196&-2.268$\pm$0.195&4.781$\pm$0.738&6.718$\pm$1.565&5.781$\pm$1.340&2e-4\\
    &&4&-3.532$\pm$0.996&-4.014$\pm$0.655&4.031$\pm$0.585&5.343$\pm$1.018&4.375$\pm$0.695&2e-3\\
    &&5&-4.673$\pm$1.026&-6.157$\pm$2.924&3.468$\pm$0.499&4.343$\pm$0.592&3.281$\pm$0.514&4e-5\\\cline{2-9}

    &\multirow{4}{*}{Rand.}
    &2&-2.034$\pm$0.100&-2.099$\pm$0.102&6.062$\pm$1.197&7.562$\pm$1.657&2.062$\pm$1.784&3e-5\\
    &&3&-3.086$\pm$0.207&-3.331$\pm$0.387&4.812$\pm$0.526&6.843$\pm$1.227&2.687$\pm$1.959&1e-4\\
    &&4&-4.307$\pm$0.290&-5.731$\pm$0.435&3.500$\pm$0.559&5.656$\pm$0.642&2.593$\pm$1.497&8e-7\\
    &&5&-6.327$\pm$0.522&-9.539$\pm$1.053&3.250$\pm$0.433&4.718$\pm$0.571&2.562$\pm$1.087&8e-7\\
    \hline
    
    \multirow{8}{*}{Diabetes}
    &
    \multirow{4}{*}{PCA}
    &2&-2.079$\pm$0.122&-2.212$\pm$0.132&5.718$\pm$1.304&7.468$\pm$1.478&1.062$\pm$0.242&8e-6\\
    &&3&-3.010$\pm$0.364&-3.606$\pm$0.420&3.593$\pm$0.860&7.062$\pm$1.058&1.843$\pm$1.543&2e-6\\
    &&4&-4.002$\pm$0.415&-4.423$\pm$0.701&3.000$\pm$0.000&5.906$\pm$0.804&2.343$\pm$1.107&2e-3\\
    &&5&-6.139$\pm$0.661&-6.043$\pm$1.192&3.000$\pm$0.000&3.750$\pm$0.968&1.843$\pm$0.617&0.91\\\cline{2-9}

    &\multirow{4}{*}{Rand.}
    &2&-3.074$\pm$0.224&-3.277$\pm$0.287&6.843$\pm$1.227&9.250$\pm$1.936&1.093$\pm$0.384&7e-5\\
    &&3&-4.726$\pm$0.483&-5.353$\pm$0.751&4.968$\pm$0.769&8.406$\pm$1.343&1.625$\pm$1.672&2e-5\\
    &&4&-6.017$\pm$0.873&-7.732$\pm$1.497&4.062$\pm$0.704&6.718$\pm$1.328&2.093$\pm$1.155&1e-5\\
    &&5&-8.986$\pm$1.292&-12.61$\pm$2.477&3.062$\pm$0.242&5.093$\pm$0.521&2.531$\pm$0.865&2e-6\\
    \hline
  \end{tabular}
\end{table*}
For our experiments we used the Tensorly library \cite{tensorly} to perform the nonnegative Tucker decomposition \cite{kim07} with Tucker rank $[k,k,\ldots,k]$ which was then projected to the simplex of probability tensors using \cite{duchi08}. We also performed experiments with nonnegative PARAFAC decompositions using \cite{shashua05,tensorly}. These decompositions performed poorly. This is potentially because the PARAFAC optimization is more difficult or the additional flexibility of the Tucker decomposition was more appropriate for the experimental datasets.
\subsection{Experimental Setup and Results}\label{sec:expsetup}
Our experiments were performed on the Scikit-learn ``toy'' datasets MNIST and Diabetes \cite{sklearn}, with labels removed. We used the expression inside the minimization in (\ref{eqn:l2obj}) to evaluate performance. Our experiments considered estimating histograms in $d=2,3,4,5$ dimensional space. We consider two forms of dimensionality reduction. First we consider projecting the dataset onto its top $d$ principle components. As an alternative we consider projecting our dataset onto a random subspace of dimension $d$. We have constructed our random subspace dimensionality reduction so that each additional dimension adds a new index without affecting the others. For each dataset we randomly select an orthonormal basis that remains unchanged for all experiments $v_1,v_2,\ldots$. To transform a point $X$ to dimension $d$ we perform the following transform
\begin{align*}
  X_{\text{reduced dim.}  }=\left[
    v_1\cdots v_d
\right]^T X.
\end{align*}
 We consider both transforms since PCA may select dimensions where the features tend to be independent, as is the case when the distribution is a multivariate Gaussian. After dimensionality reduction we scale and translate the data to fit in the unit cube.

With our preprocessed dataset, each experiment consisted of randomly selecting 200 samples for training and using the rest to evaluate performance. For the estimators we tested all combinations using 1 to $\bmax$ bins per dimension and $k$ from 1 to $\kmax$. As $d$ increased the best cross validated $b$ and $k$ value decreased, so we reduced $\bmax$ and $\kmax$ for larger $d$ to reduce computational time, while still leaving a sizable gap between the best cross validated $b$ and $k$ across all runs of all experiment. For $d=2,3$ we have $\bmax = 15$ and $\kmax = 10$; for $d=4$ we have $\bmax = 12$ and $\kmax = 8$; for $d=5$ we have $\bmax = 8$ and $\kmax = 6$.
For parameter fitting we used random subset cross validation repeated 80 times using 40 of the 200 samples to cross validate. Performing 80 folds of cross validation was necessary because of the variance of the histogram estimated loss. This high variance is likely due to the noncontinuous nature of the histogram estimator itself and the noncontinuity of the histogram as a function of the data, i.e. slightly moving one training sample can potentially the change histogram bin in which it lies. Each experiment was run 32 times and we report the mean and standard deviations of estimator performance as well as the best parameters found from cross validation. We additionally apply the Wilcoxon signed rank test to the 32 pairs of performance results to statistically determine if the mean performance between the standard histogram and our algorithm are different and report the corresponding $p$-value. Our results are in Table \ref{tab:results} where the Tucker histogram dominates. We also observe that the Tucker histogram can estimate more bins per dimension than the standard histogram and is able to estimate more bins per dimension when the number of components of components is lower. This corroborates the number of components versus the bins per dimension trade-off from Section \ref{ssec:discussion}.
\section{Conclusion}
Through analysis of the histogram estimator, we have theoretically and empirically demonstrated that NNTF models can also be used to improve nonparametric density estimation. Though the histogram estimator is not a particularly popular estimator,we hope that the ideas presented here can be adapted to improve other techniques in nonparametric statistics.

  \bibliographystyle{abbrvnat}
\section*{Acknowledgements} 
This work was supported by the Berlin Institute for the Foundations of Learning and Data (BIFOLD) sponsored by the German Federal Ministry of Education and Research (BMBF).

\bibliography{main}
\appendix
  \section{Proofs Omitted from Main Text} \label{appx:proofs}
  \textbf{All norms are either the $\ell^1$, $L^1$, or total variation norm}, which are equivalent with respect to our analysis and the proper norm will be clear from context.

  \begin{proof}[Proof of Lemma \ref{lem:histcover}]
	  In Section 7.4 from \cite{devroye01}, the authors show that for any collection of measures $\mu_1,\ldots,\mu_b$, for all $\varepsilon >0$, that 
	  \begin{align*}
		  N\left( \conv\left(\left\{ \mu_1,\ldots, \mu_b \right\}\right), \varepsilon  \right) \le \left(b + \frac{b}{\varepsilon} \right)^b.
	  \end{align*}
	  With the additional assumption that $\varepsilon \le 1$ we have that $b + \frac{b}{\varepsilon} \le\frac{b}{\varepsilon}+\frac{b}{\varepsilon} =  \frac{2b}{\varepsilon}$ and thus
	  \begin{align*}
		  N\left( \conv\left(\left\{ \mu_1,\ldots, \mu_b \right\}\right)\right)\le \left(\frac{2b}{\varepsilon} \right)^b.
	  \end{align*}
	  If we let $\mu_i = \be_i$, the indicator vector at index $i$, then the lemma follows.
  \end{proof}

  \begin{proof}[Proof of Lemma \ref{lem:simplecover}]
	  From Lemma \ref{lem:histcover} we know there exists a finite collection of probability vectors $\tsP$ such that $\tsP$ is an $\varepsilon/d$-covering of $\Delta_b$ and $\left| \tsP\right| \le \left(\frac{2bd}{\varepsilon} \right)^b$. Note that the set $\left\{\tp_1\otimes\dots\otimes \tp_d: \tp_i \in \tsP \right\}$ contains at most $\left( \left(\frac{2bd}{\varepsilon} \right)^b\right)^d = \left(\frac{2bd}{\varepsilon} \right)^{bd}$ elements. We will now show that this set is an $\varepsilon$-cover of $\sT_{d,b}^1$. Let $p_1\ot \cdots \ot p_d \in \sT_{d,b}^1$ be arbitrary. From our construction of $\tsP$ there exist elements $\tp_1,\ldots,\tp_d \in \tsP$ such that $\left\|p_i - \tp_i\right\| \le \frac{\varepsilon}{d}$.

	  We will now make use of Lemma 3.3.7 in \cite{reiss89}, which states that, for any collection of probability vectors $q_1,\ldots, q_d$ and $\tq_1,\ldots, \tq_d$, the following holds
	  \begin{align*}
		  \left\|\prod_{i=1}^d q_i -\prod_{j=1}^d \tq_j\right\| \le \sum_{i=1}^d \left\|q_i- \tq_i \right\|.
	  \end{align*}
	  From this it follows that
	  \begin{align*}
		  \left\| \prod_{i=1}^d p_i - \prod_{j=1}^d \tp_j \right\|
		  \le \sum_{i=1}^d \left\|p_i - \tp_i\right\|
		  \le d \frac{\varepsilon}{d}
		  = \varepsilon
	  \end{align*}
	  thus completing our proof.
  \end{proof}

  \begin{proof}[Proof of Lemma \ref{lem:mixcover}]
	  Let $\tsP$ be the finite collection of probability measures with $|\tsP| = N\left(\sP,\varepsilon\right)$ which $\varepsilon$-covers $\sP$. Similarly let $W \subset \Delta_k$ with $|W| = N\left(\Delta_k,\delta\right)$  such that $W$ is a $\delta$-cover of $\Delta_k$. Consider the set 
	  \begin{align*}
		  \Omega = \left\{ \sum_{i=1}^k \tw_i \tp_i \middle| \tw \in W, \tp_i \in \tsP   \right\}.
	  \end{align*}
	  Note that this set contains at most $N\left(\sP,\varepsilon\right)^k N\left(\Delta_k,\delta\right)$ elements. We will now show that it $\left(\delta + \varepsilon\right)$-covers $k\mix\left(\sP\right)$, which completes the proof. Let $\sum_{i=1}^k p_i w_i \in k\operatorname{-mix}\left(\sP\right)$. We know there exists elements $\tp_1,\ldots,\tp_k \in \tsP$ such that $\left\|\tp_i - p_i\right\|\le \varepsilon$ and $\tw \in W$ such that $\left\| w - \tw \right\| \le \delta$ and thus $\sum_{i=1}^k \tp_i \tw_i \in \Omega$. Now observe that
	  \begin{align*}
		  \left\|\sum_{i=1}^k \tp_i \tw_i - \sum_{j=1}^{k} p_j  w_j\right\|
		  &= \left\|\sum_{i=1}^k \tp_i \tw_i - \sum_{j=1}^k p_j \tw_j + \sum_{l=1}^k p_l \tw_l - \sum_{r=1}^{k} p_r  w_r\right\| \\
		  &\le \left\|\sum_{i=1}^k \left(\tp_i - p_i\right) \tw_i     \right\|+ \left\| \sum_{i=1}^k p_i \left(\tw_i -  w_i\right)\right\| \\
		  &\le \sum_{i=1}^k \tw_i \left\| \tp_i - p_i \right\|+  \sum_{i=1}^k  \left|\tw_i -  w_i\right| \\
		  &\le \sum_{i=1}^k \tw_i \varepsilon + \left\|\tw -  w\right\| \\
		  &\le \varepsilon + \delta.
	  \end{align*}
  \end{proof}

  \begin{proof}[Proof of Lemma \ref{lem:lrcover}]
	  Note that $\sT_{d,b}^k = k\operatorname{-mix}\left(\sT_{d,b}^1\right)$.
	  Applying Lemma \ref{lem:mixcover} followed by Lemmas \ref{lem:histcover} and \ref{lem:simplecover} we have that
	  \begin{align*}
		  N\left(\sT_{d,b}^k, \varepsilon \right) 
		  \le N\left(\sT_{d,b}^1, \varepsilon/2 \right)^k N\left(\Delta_k, \varepsilon/2 \right)
		  \le \left(\frac{4bd}{\varepsilon} \right)^{bdk} \left( \frac{4k}{\varepsilon}\right)^k.
	  \end{align*}
  \end{proof}

  \begin{proof}[Proof of Lemma \ref{lem:tuckcover}]
	  Fix $k,d,b$ and $0<\varepsilon \le 1$. We are going to construct an $\varepsilon$-cover of $\tsT_{d,b}^k$.
	  From Lemma \ref{lem:histcover} we know that there exists a set $\sB \subset \Delta_b$ which $\left(\frac{\varepsilon}{2d}\right)$-covers of $\Delta_b$ and contains no more than $\left(\frac{4bd}{\varepsilon}\right)^b$ elements. Let $\sP$ be the collection of all $d\times k$ arrays whose entries are elements from $\sB$. So we have that 
	  \begin{align*}
		  \left|\sP\right| = \left|\sB\right|^{dk}\le \left(\frac{4bd}{\varepsilon}\right)^{bdk}.
	  \end{align*}

	  From Lemma \ref{lem:histcover} there exists $\sW$ which is an $\varepsilon/2$-cover of $\sT_{d,k}$ and contains no more than $\left(4k^d/\varepsilon\right)^{\left(k^d\right)}$ elements. Now let
	  \begin{align*}
		  \sL_{d,b}^k =  \left\{ \sum_{S \in [k]^d} \tW_S \prod_{i=1}^d \tp_{i,S_i} \middle| \tW \in \sW, \tp\in \sP \right\}.
	  \end{align*}
	  Note that 
	  \begin{align*}
		  \left| \sL_{d,b}^k\right| \le \left|\sW\right| \left|\sP\right| \le \left(\frac{4k^d}{\varepsilon} \right)^{k^d} \left(\frac{4bd}{\varepsilon}\right)^{bdk}.
	  \end{align*}
	  We will now show that $\sL_{d,b}^k$ is an $\varepsilon$-cover of $\tsT_{d,b}^k$. To this end let $\sum_{S\in \left[k\right]^d} W_S \prod_{i=1}^{d}p_{i,S_i} \in \tsT_{d,b}^k$ be arbitrary, where $W\in \sT_{d,k}$ and $p_{i,j} \in \Delta_b$. From our construction of $\sW$, there exists $\tW \in \sW$ such that $\left\|W - \tW\right\| \le \varepsilon/2$. There also exists $\tp \in \sP$ such that $\left\|\tp_{i,j} - p_{i,j}\right\| \le \varepsilon/2$ for all $i,j$. Therefore we have that
	  \begin{align*}
		  \sum_{S \in \left[k\right]^d} \tW_S \prod_{i=1}^d \tp_{i,S_i} \in \sL_{d,b}^k.
	  \end{align*}
	  So finally 
	  \begin{align*}
		  &\left\|\sum_{S\in \left[k\right]^d} W_S \prod_{i=1}^{d}p_{i,S_i} - \sum_{R \in \left[k\right]^d} \tW_R \prod_{j=1}^d \tp_{j,R_j} \right\|\\
		  &\le \left\|\sum_{S\in \left[k\right]^d} W_S \prod_{i=1}^{d}p_{i,S_i} - \sum_{R \in \left[k\right]^d} W_R \prod_{j=1}^d \tp_{j,R_j} \right\|
		  +\left\|\sum_{S \in \left[k\right]^d} W_S \prod_{i=1}^d \tp_{i,S_i} - \sum_{R \in \left[k\right]^d} \tW_R \prod_{j=1}^d \tp_{j,R_j} \right\|\\
		  &\le\sum_{S\in \left[k\right]^d}W_S \left\|  \prod_{i=1}^{d}p_{i,S_i} - \prod_{j=1}^d \tp_{j,S_j} \right\|
		  +\sum_{R \in \left[k\right]^d} |W_R- \tW_R|\left\|   \prod_{j=1}^d \tp_{j,R_j} \right\|\\
		  &\le\sum_{S\in \left[k\right]^d}W_S \varepsilon
		  +\left\|W - \tW\right\|\\
		  &\le \varepsilon/2+ \varepsilon/2 = \varepsilon.
	  \end{align*}

  \end{proof}

  \begin{proof}[Proof of Theorem \ref{thm:genrate}]
	  We will be applying the estimator from Lemma \ref{lem:finiteest} to a series of $\delta$-covers of $\sH_{d,b}^k$. We begin by constructing a series of $\delta$-covers whose cardinality doesn't grow too quickly. Corollary \ref{cor:lrcover} states that, for all $0< \delta \le 1$, that $N\left(\sH_{d,b}^k,\delta\right) \le \left( \frac{4bd}{\delta} \right)^{bdk} \left( \frac{4k}{\delta}\right)^k$. \emph{For sufficiently large} $b$ and $k$ and \emph{sufficiently small} $\delta$, the following holds
	  \begin{align}
		  \log \left(\left(\frac{4bd}{\delta} \right)^{bdk} \left( \frac{4k}{\delta}\right)^k \right)\notag
		  & =bdk \log\left(\frac{4bd}{\delta}\right) + k \log\left(\frac{4k}{\delta} \right)\notag\\
		  & = bdk\left[\log \left(b\right) + \log\left(\frac{4d}{\delta}\right)\right] + k \left[\log\left(k\right) + \log\left(\frac{4}{\delta} \right)\right]\notag\\
		  & \le bdk\left[\log \left(b\right) + \log\left(b\right)\log\left(\frac{4d}{\delta}\right)\right] + dk \left[\log\left(k\right) + \log \left(k\right)\log\left(\frac{4d}{\delta} \right)\right]\notag\\
		  & = \left(bk \log\left( b\right) + k\log\left(k\right)\right) d\left( 1+\log \left(\frac{4d}{\delta}\right) \right) .\label{eqn:same}
	  \end{align}
	  Using the argument from the proof of Lemma \ref{lem:finiteest} we have that, because  $n/(bk\log(b) + k\log(k)) \to \infty$ there exists a sequence of positive values $C = C(n)$ such that $C \to \infty$ and $n>C\left[bk\log(b) + k\log(k)\right]$. If we let $\delta = \frac{4d}{\exp\left(\frac{C}{d}-1\right)}$ we have that $\delta \to 0$ and
	  \begin{align*}
		  \left(bk \log\left( b\right) + k\log\left(k\right)\right) d\left( 1+\log \left(\frac{4d}{\delta}\right) \right) \le n.
	  \end{align*}
	  Because of this we can construct collections of densities $\tsP_n\subset \sH_{d,b}^k$ such that $\tsP_n$ is a $\delta$-covering of $\sH_{d,b}^k$ with $\left| \tsP\right| \to \infty$, $n/\log\left|\tsP_n\right| \to \infty$ and $\delta \to 0$. Let $V_n$ be the estimator from Lemma \ref{lem:finiteest} applied to the sequence $\tsP_n$.

	  Let $\varepsilon >0$ be arbitrary. Due to the way that we have constructed the sequence $\tsP_n$, for sufficiently large $n$, we have that $3\sup_{q \in \sH_{d,b}^k} \min_{\tq \in \tsP_n}\left\|q - \tq \right\| \le \varepsilon/2$. It therefore follows that, for sufficiently large $n$, the following holds for all $p \in \sD_d$
	  \begin{align*}
		  3 \min_{q \in \sH_{d,b}^k} \left\|p-q\right\| + \varepsilon
		  &\ge 3 \min_{q \in \sH_{d,b}^k} \left\|p-q\right\| + 3\sup_{q \in \sH_b^k} \min_{\tq \in \tsP_n}\left\|q - \tq \right\|+ \varepsilon/2 \\
		  &\ge 3 \min_{q \in \sH_{d,b}^k}\left[ \left\|p-q\right\| + \min_{\tq \in \tsP_n}\left\|q - \tq \right\|\right]+ \varepsilon/2 \\
		  &= 3 \min_{q \in \sH_{d,b}^k}\min_{\tq \in \tsP_n} \left\|p-q\right\| + \left\|q - \tq \right\|+ \varepsilon/2 \\
		  &\ge 3 \min_{\tq \in \tsP_n} \left\|p- \tq \right\|+ \varepsilon/2.
	  \end{align*}
	  From this we have that, for sufficiently large $n$
	  \begin{align*}
		  \sup_{p \in \sD_d }P\left(\left\|V_i - p\right\| > 3 \min_{q\in \sH_{d,b}^k}\left\|p -q \right\| + \varepsilon  \right)
		  \le \sup_{p \in \sD_d }P\left(\left\|V_i - p\right\| > 3 \min_{\tq \in \tsP_n} \left\|p- \tq \right\|+ \varepsilon/2  \right)
	  \end{align*}
	  and the right side goes to zero due to Lemma \ref{lem:finiteest}, thus completing the proof.
  \end{proof}

  \begin{proof}[Proof of Theorem \ref{thm:tuckrate}]
	  This proof is very similar to the proof of Theorem \ref{thm:genrate}. We will be applying the estimator from Lemma \ref{lem:finiteest} to a series of $\delta$-covers of $\tsH_{d,b}^k$. We begin by constructing a series of $\delta$-covers whose cardinality doesn't grow too quickly. Corollary \ref{cor:tuckcover} states that, for all $0< \delta \le 1$, that $N\left(\tsH_{d,b}^k,\delta\right) \le\left(\frac{4bd}{\delta} \right)^{bdk} \left( \frac{4k^d}{\delta}\right)^{k^d}$. \emph{For sufficiently large} $b$ and $k$ and \emph{sufficiently small} $\delta$, the following holds
	  \begin{align*}
		  \log \left(\left(\frac{4bd}{\delta} \right)^{bdk} \left(\frac{4k^d}{\delta} \right)^{k^d} \right)
		  &=bdk \log \left(\frac{4bd}{\delta}\right) + k^d \log \left(\frac{4k^d}{\delta} \right)\\
		  &\le d\left(bk \log \left(\frac{4bd}{\delta}\right) + k^d \log \left(\frac{4k^d}{\delta}\right) \right)\\
		  &= d\left(bk \left(\log(b) + \log \left(\frac{4d}{\delta}\right)\right) + k^d \left( \log\left(k^d\right) + \log\left(\frac{4}{\delta}\right)  \right) \right)\\
		  &\le d\left(bk \left(\log(b)+ \log \left(\frac{4d}{\delta}\right)\right) + k^d \left( \log\left(k^d\right) + \log\left(\frac{4d}{\delta}\right)  \right) \right)\\
		  &= \left(bk \log(b) + k^d  \log\left(k^d\right)  \right)d\left(1 + \log \left(\frac{4d}{\delta}\right)\right).
	  \end{align*}
	  Note that replacing $bk \log\left(b\right) + k \log\left(k\right)$ with $bk \log \left(b\right)  + k^d  \log\left(k^d\right)$ in the last line is exactly (\ref{eqn:same}) in our proof of Theorem \ref{thm:genrate} . From here we can proceed exactly as in the proof of Theorem \ref{thm:genrate} by replacing $\sH_{d,b}^k$ with $\tsH_{d,b}^k$ and $bk \log\left(b\right) + k \log\left(k\right)$ with $bk \log \left(b\right)  + k^d  \log\left(k^d\right)$.
  \end{proof}

  \begin{proof}[Proof of Lemma \ref{lem:lrbias}]
	  Let $\varepsilon >0$. Theorem 5 in Chapter 2 of \cite{gyorfi85} states that, for any $p \in \sD_d$, that $\min_{h\in \sH_{d,b}}\left\|p-h\right\| \to 0$ as $b\to \infty$, i.e. the bias of a histogram estimator goes to zero as the number of bins per dimension goes to infinity. Thus there exists a sufficiently large $B$ such that there exists a histogram $h \in \sH_{d,B}$ which is a good approximation of $p$, $\left\|p - h \right\| <\varepsilon/2$. In this proof we we will argue that once $k\ge B^d$ and $b$ is sufficiently large, we can find an element of $\sH_{d,k}^k$ where the multi-view components can approximate the $k$ bins of $h$.

	  We have that 
	  \begin{align*}
		  h = \sum_{A \in \left[B\right]^{\times d}} w_A \prod_{i=1}^d h_{d,B,A_i}.
	  \end{align*}
	  From the same theorem there exists $a_0$ such that, for all $a\ge a_0$,  for all $i$, there exists  $\th_{1,a,i} \in \sH_{1,a}$ such that $\left\|h_{1,B,i} -\th_{1,a,i} \right\| < \varepsilon/(2d)$ for all $i\in [B]$. For any multi-index $A \in [B]^d$, we define
	  \begin{align*}
		  \th_{d,a,A} = \prod_{j=1}^d \th_{1,a,A_j}.
	  \end{align*}
	  Now we have that, for all $a \ge a_0$ and $A \in [B]^d$,
	  \begin{align}
		  \left\|h_{d,B,A} - \th_{d,a,A}\right\|
		  & = \left\| \prod_{i=1}^d h_{1,B,A_i} - \prod_{j=1}^d \th_{1,a,A_j}\right\|\notag \\
		  & \le \sum_{i=1}^d \left\|h_{1,B,A_i} - \th_{1,a,A_i} \right\|\label{eqn:supp}\\
		  & \le d \frac{\varepsilon}{2d} \notag\\
		  & = \varepsilon/2,\notag
	  \end{align}
	  where we use the previously mentioned product measure inequality for (\ref{eqn:supp}).
	  As soon as $k\ge B^d$ and $a\ge a_0$ the set $\sH_{d,a}^k$ contains the element,
	  \begin{align*}
		  \th \triangleq \sum_{A \in \left[B\right]^{\times d}} w_A \th_{d,a,A}.
	  \end{align*}
	  Now we have that, for all $a\ge a_0$.
	  \begin{align*}
		  \left\|h - \th\right\|
		  & = \left\|\sum_{A \in \left[B\right]^{\times d}} w_A h_{d,B,A} - \sum_{Q \in \left[B\right]^{\times d}} w_Q \th_{d,a,Q}  \right\|\\
		  & \le \sum_{A \in \left[B\right]^{\times d}}w_A \left\|  h_{d,B,A} -  \th_{d,a,A}  \right\|\\
		  & \le \varepsilon/2.
	  \end{align*}
	  From the triangle inequality we have that 
	  \begin{align*}
		  \left\|p - \th \right\| \le 
		  \left\|p- h \right\| + \left\| h- \th\right\| \le\varepsilon.
	  \end{align*}
	  So we have that, for sufficiently large $b$ and $k$ 
	  \begin{align*}
		  \min_{q \in \sH_{d,b}^k} \left\| p - q \right\| \le \varepsilon
	  \end{align*}
	  which completes our proof.
  \end{proof}

  \begin{proof}[Proof of Lemma \ref{lem:tuckbias}]
	  We will show that $\sH_{d,b}^k \subset \tsH_{d,b}^k$ and the theorem clearly follows due to Lemma \ref{lem:lrbias}. Any element of $\sH_{d,b}^k$ will have the following representation
	  \begin{align}
		  \sum_{i=1}^k w_i \prod_{j=1}^d f_{i,j}: w \in\Delta_k, f_{i,j} \in \sH_{1,b}.\label{eqn:tuckbias}
	  \end{align}
	  Letting $W \in \sT_{d,k}$ with $W_{i,\ldots,i} = w_i$ for all $i$ and the rest of the entries of $W$ be zero and letting $\tf_{j,i} = f_{i,j}$ for all $i,j$ we have that 
	  \begin{align*}
		  \sum_{S\in [k]^d} W_S \prod_{j=1}^d \tf_{j,S_j}
		  &= \sum_{i=1}^k W_{i,\ldots,i}\prod_{j=1}^d \tf_{j,i}\\
		  &= \sum_{i=1}^k w_i \prod_{j=1}^d f_{i,j}
	  \end{align*}
	  so we have that (\ref{eqn:tuckbias}) is an element of $\tsH_{d,b}^k$ and we are done.
  \end{proof}

  \begin{proof}[Proof of Theorem \ref{thm:lower}]
	  We will proceed by contradiction. Suppose $V_n$ is an estimator violating the theorem statement, i.e. there exist sequences $b\to \infty$ and $k\to \infty$ with $n/\left(bk\right) \to 0$ and $b\ge k$ such that, for all $\varepsilon >0$,
	  \begin{align*} 
		  \sup_{p \in \sD_d }P\left(\left\|V_n - p\right\| > 3 \min_{q\in \sH_{d,b}^k}\left\|p -q \right\| + \varepsilon  \right) \to 0.
	  \end{align*}

	  Let $\left(p_n\right)_{n=1}^\infty$ be a sequence of probability vectors $p_n \in \Delta_{b(n)\times k(n)}$ which represent distributions over $\left[b(n)\right] \times \left[k(n)\right]$. Let $\sX_n \triangleq  \left(X_{n,1},\ldots,X_{n,n} \right)$ with $X_{n,1},\ldots,X_{n,n} \simiid p_n$.

	  We will now construct a series of estimators for $p_n$ using $V_n$. Let $\tsX_n = \left(\tX_{n,1},\ldots, \tX_{n,n} \right)$ which are independent random variables with $\tX_{n,i} \sim h_{d,b,\left(X_{n,i},1,\ldots,1\right)}$. For this proof we will assume $d > 2$ but the proof can be simplified in a straightforward manner to the $d=2$ case by ignoring the indices and modes beyond the second. Note that that $X_{n,i}$ contains two indices.  Now we have the following for the densities of $\tX_{n,i}$
	  \begin{align}
		  p_{\tX_{n,i}} \notag
		  &= \sum_{(j,\ell) \in \left[b\right] \times \left[k\right]} p_{\tX_{n,i} | X_{n,i} = \left(j,\ell\right)} P(X_{n,i} = \left(j,\ell\right))\\\notag
		  &= \sum_{(j,\ell) \in \left[b\right] \times \left[k\right]} h_{d,b,\left(j,\ell,1,\ldots, 1\right)}  p_{n}\left(j,\ell\right)\\\notag
		  &= \sum_{ \ell \in \left[k\right]} \sum_{ j \in \left[b\right]} h_{d,b,\left(j,\ell,1,\ldots, 1\right)}  p_{n}\left(j,\ell\right)\\\notag
		  &= \sum_{ \ell \in \left[k\right]} \sum_{ j \in \left[b\right]}p_{n}\left(j,\ell\right) h_{1,b,j} \otimes h_{1,b,\ell} \otimes \prod_{a \in [d-2]}h_{1,b,1}\notag  \\
		  &= \sum_{ \ell \in \left[k\right]} \left(\sum_{ j \in \left[b\right]}p_{n}\left(j,\ell\right) h_{1,b,j}\right) \otimes h_{1,b,\ell} \otimes \prod_{a \in [d-2]}h_{1,b,1} \label{eqn:histform} \\
		  &= \sum_{ \ell \in \left[k\right]}\left(\sum_{ q \in \left[b\right]}p_{n}\left(q,\ell\right)\right) \left(\sum_{ j \in \left[b\right]}\frac{p_{n}\left(j,\ell\right)}{\sum_{ q \in \left[b\right]}p_{n}\left(q,\ell\right)} h_{1,b,j}\right) \otimes h_{1,b,\ell} \otimes \prod_{a \in [d-2]}h_{1,b,1}. \label{eqn:khistform}
	  \end{align}
	  This last line is in the form of (\ref{eqn:lrhist}) in the main text and is thus an element of $\sH_{d,b}^k$. To see this we will show the correspondence between the terms in (\ref{eqn:khistform}) from here and the terms in (\ref{eqn:lrhist}) in the main text:
	  \begin{align*}
		  w_\ell &:= \left(\sum_{ q \in \left[b\right]}p_{n}\left(q,\ell\right)\right)\\
		  f_{\ell,1} &:= \left(\sum_{ j \in \left[b\right]}\frac{p_{n}\left(j,\ell\right)}{\sum_{ q \in \left[b\right]}p_{n}\left(q,\ell\right)} h_{1,b,j}\right) \\
		  f_{\ell,2} &:= h_{1,b,\ell}\\
		  f_{i,j} &:= h_{1,b,1}, \forall j>2, \forall i.
	  \end{align*}
	  Let $V_n$ estimate $\tP_n \triangleq p_{\tX_{n,i}}$ so $\tX_{n,1}, \ldots , \tX_{n,n} \simiid \tP_n$. We will use $V_n$ to construct an estimator $v_n$ for $p_n$.

	  Because $\tP_n \in \sH_{d,b}^k$ \footnote{We will use this portion of the proof again for our proof of Theorem \ref{thm:tucklower} \label{fn:lower}} for all $n$ we have that $\left\|V_n - \tP_n\right\| \cip 0$ and thus $\left\| U_{d,b}^{-1}(V_n) - U_{d,b}^{-1}(\tP_n)\right\|\cip 0 $. Note that $\left[U_{d,b}^{-1}(\tP_n)\right]_{j,\ell,A} = p_n(j,\ell)$ when $A=\left(1,\ldots,1\right)$ and zero otherwise (see (\ref{eqn:histform})). We define the linear operator $B_n: \sT_{d,b} \to \Delta_{b\times k}$ as 

	  \begin{align*}
		  \left[B_n(T)\right]_{j,\ell} \triangleq \sum_{A \in \left[b\right]^{\times d - 2}} T_{j,\ell, A}
	  \end{align*}
	  i.e. the linear operator which sums out all modes except for the first two.
	  We have that $B_n(U_{d,b}^{-1}(\tP_n)) = p_n$. Now let $v_n = B_n(U_{d,b}^{-1}(V_n))$ be the estimator for $p_n$. Now we have that
	  \begin{equation*} \label{eqn:lower}
		  \left\|v_n - p_n\right\|
		  = \left\|B_n(U_{d,b}^{-1}(\tP_n)) - B_n(U_{d,b}^{-1}(V_n)) \right\|= \left\|B_n(U_{d,b}^{-1}(\tP_n - V_n)) \right\| .
	  \end{equation*}
	  We have that $B_n$ is a nonexpansive operator due to the triangle inequality,
	  \begin{align*}
		  \left\|B_n\left(T\right)\right\|
		  = \sum_{j,l} \left|\sum_{A \in \left[b\right]^{\times d - 2}} T_{j,\ell, A}  \right| \le \sum_{j,l}\sum_{A \in \left[b\right]^{\times d - 2}} \left| T_{j,\ell, A}  \right|
		  = \left\|T\right\|,
	  \end{align*}
	  so the operator norm of $B_n$ is less than or equal to one. We also know that $U^{-1}_{d,b}$ an isometry and $\left\|\tP_n - V_n\right\|\cip 0$, so it follows that $\left\|v_n - p_n\right\|\cip 0$ for any sequence of $p_n\in \Delta_{\left[b(n)\right] \times \left[k(n)\right]}$. We will now use following theorem from \cite{han15} to show that no such estimator $v_n$ can exist.

	  \begin{thm}[\cite{han15} Theorem 2.]
		  For any $\zeta \in \left(0,1\right]$, we have
		  \begin{align*}
			  &\inf_{\hat{p}} \sup_{p\in \Delta_a} \E_p\left\|\hat{p}-p\right\| \ge 
			  \frac{1}{8} \sqrt{\frac{ea}{\left(1+\zeta\right)n}} \1 \left( \frac{\left(1 + \zeta\right)n}{a}>\frac{e}{16}\right)\\
			  &\quad+ \exp\left(- \frac{2\left(1 + \zeta\right)n}{a}\right) \1 \left( \frac{\left(1 + \zeta\right)n}{a}\le\frac{e}{16}\right)
			  - \exp\left(-\frac{\zeta^2n}{24}\right) - 12 \exp\left(- \frac{\zeta^2 a}{32 \left(\log a \right)^2}\right)
		  \end{align*}
		  where the infimum is over all estimators.
	  \end{thm}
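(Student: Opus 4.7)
The plan is to prove this two-regime minimax lower bound by combining Poissonization with two prior constructions—Assouad's lemma for the polynomial $\sqrt{a/n}$ regime and a ``missing mass'' Le Cam two-point argument for the exponential regime—and to account for the two subtractive exponential terms via a de-Poissonization correction and a simplex-projection correction, respectively.

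First, I would Poissonize the sample size: rather than observing exactly $n$ iid samples from $p\in\Delta_a$, replace $n$ with $N\sim \mathrm{Pois}((1+\zeta)n)$, so that the $a$ symbol counts $(N_1,\ldots,N_a)$ become independent Poissons with rates $(1+\zeta)np_i$ and the coordinates decouple. Any minimax lower bound proved in the Poissonized model transfers to the fixed-$n$ model with an additive loss of $P(N<n)$, and a standard Chernoff bound on the left tail of a Poisson gives $P(N<n)\le \exp(-\zeta^2 n/24)$, which accounts exactly for the first subtractive term. The $(1+\zeta)$ oversampling factor is what propagates into the leading terms of the bound.

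Next, I would handle the two indicator regimes separately. When $(1+\zeta)n/a > e/16$, I would apply Assouad's lemma on a hypercube-indexed family $p_\tau = \tfrac{1}{a}\mathbf{1} + \tfrac{\varepsilon}{a}\tau$ for $\tau\in\{-1,+1\}^a$ (using sign cancellation in pairs to keep $p_\tau\in\Delta_a$), choosing $\varepsilon \asymp \sqrt{a/((1+\zeta)n)}$ so that the KL divergence between adjacent hypotheses is $O(1)$ after Poissonization. The standard Assouad calculation then recovers the leading term $\tfrac{1}{8}\sqrt{ea/((1+\zeta)n)}$ (tracking constants through the Poisson KL is routine). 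In the complementary regime $(1+\zeta)n/a \le e/16$, I would run a Le Cam two-point argument between two distributions differing only on a large set $S$ of ``light'' symbols whose per-symbol mass is $\Theta(1/a)$; the key observation is that under Poissonization each light symbol is unseen with probability $e^{-(1+\zeta)n/a}$, so the total variation affinity between the two sample laws is lower bounded by $e^{-(1+\zeta)n/a}$, and squaring through the standard Le Cam inequality produces the $\exp(-2(1+\zeta)n/a)$ term.

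Finally, the remaining correction $-12\exp\!\bigl(-\zeta^2 a/(32(\log a)^2)\bigr)$ arises when projecting the (only approximately normalized) Poissonized priors back onto the simplex $\Delta_a$: the randomized distributions produced in the Assouad/Le Cam steps are sums of $a$ independent perturbations whose total mass concentrates around $1$, and one needs a Bernstein/Chernoff-type bound with a safety buffer of order $1/\log a$—plus a union bound over the $a$ symbols—to ensure the prior is supported in $\Delta_a$ with overwhelming probability, which contributes the factor $12$ and the $(\log a)^2$ scaling. I expect this last step to be the main obstacle: cleanly executing the simplex projection, matching the precise constants through the Poissonization/de-Poissonization cycle, and ensuring that neither the Assouad bound nor the two-point bound is diluted by more than the claimed additive corrections is the delicate bookkeeping that underlies the specific numeric form of the statement.
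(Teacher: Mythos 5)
This statement is Theorem 2 of Han, Jiao, and Weissman \cite{han15}; it is quoted and cited, not proven, in the present paper, so there is no ``paper proof'' here to compare against. What I can do is assess your reconstruction on its own terms.

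Your overall architecture is credible: Poissonize to $N\sim\mathrm{Pois}((1+\zeta)n)$ so the symbol counts decouple, prove a coordinate-wise Bayes-risk lower bound in the Poissonized model, and then pay two additive corrections (one for de-Poissonization, one for pushing the product prior onto the simplex). Your de-Poissonization bookkeeping checks out: a Poisson Chernoff bound gives $P(N<n)\le \exp(-n(\zeta-\log(1+\zeta)))\le \exp(-\zeta^2 n/6)$ for $\zeta\in(0,1]$, which is stronger than the claimed $\exp(-\zeta^2 n/24)$, so that subtractive term is safely accounted for. Allocating the third subtractive term to concentration of the (a priori unnormalized) product prior onto $\Delta_a$ is also the right intuition.

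There is, however, a concrete gap in the exponential-regime step. You write that the per-symbol affinity $e^{-(1+\zeta)n/a}$ becomes $\exp(-2(1+\zeta)n/a)$ ``by squaring through the standard Le Cam inequality.'' Le Cam's two-point method does not square the affinity: the bound is of the form $\tfrac12\|p_1-p_2\|\bigl(1-\mathrm{TV}(P_1,P_2)\bigr)$. Nor would a naive sum over $a$ independent coordinates produce the factor of $2$ in the exponent---summing per-coordinate contributions of order $(1/a)e^{-(1+\zeta)n/a}$ over $a$ symbols gives something of order $e^{-(1+\zeta)n/a}$, not its square. The factor of $2$ in the exponent must come from a more careful choice of two-point prior (e.g.\ two strictly positive masses rather than $\{0,c\}$, so the Poisson affinity is not simply $e^{-mc}$) or from a Hellinger/$\chi^2$-type inequality rather than TV, and this is precisely the bookkeeping your proposal glosses over. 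Similarly, matching the constant $\tfrac{1}{8}\sqrt{e}$ and the threshold $e/16$ in the polynomial regime is not ``routine'' constant-tracking through Assouad; the actual argument in \cite{han15} is phrased as a direct per-coordinate Bayes risk computation under a product prior, which is closely related to but not identical to an Assouad hypercube, and the specific numeric form of the statement is tied to that computation. So: right skeleton, but the two places you flag as ``routine'' or ``standard'' are exactly where the nontrivial work lives, and your stated mechanism for the $\exp(-2(1+\zeta)n/a)$ term is not correct as written.
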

	  Our estimator is equivalent to estimating a categorical distribution with $a = bk$ categories. Letting $\zeta = 1$, $bk \to \infty$, and $n\to \infty$, with $n/\left(bk\right) \to 0$, we get that for sufficiently large $n$
	  \begin{align*}
		  \inf_{\hat{p}} \sup_{p\in \Delta_{bk}} \E_p\left\|\hat{p}-p\right\| \ge \exp\left(- \frac{4n}{bk}\right) - \exp\left(-\frac{n}{24}\right) - 12 \exp\left(- \frac{bk}{32 \left(\log bk \right)^2}\right)
	  \end{align*}
	  whose right hand side converges to 1. From this we get that 
	\begin{align*}
		\liminf_{n\to \infty} \sup_{p_n\in \Delta_{bk}} \E_{p_n}\left\|v_n-p_n\right\|  > \frac{1}{2}
	\end{align*}
	which contradicts $\left\|v_n - p_n\right\|\cip 0 $ for arbitrary sequences $p_n$.
\end{proof}

  \begin{proof}[Proof of Thoerem \ref{thm:tucklower}]

	  We will proceed by contradiction. Suppose $V_n$ is an estimator violating the theorem statement, i.e. there exist sequences $b\to \infty$ and $k\to \infty$ with $n/\left(bk + k ^d\right) \to 0$ and $b\ge k$ such that, for all $\varepsilon >0$,
	  \begin{align*}
		  \sup_{p \in \sD_d }P\left(\left\|V_n - p\right\| > 3 \min_{q\in \tsH_{d,b}^k}\left\|p -q \right\| + \varepsilon  \right) \to 0.
	  \end{align*}
	  Since $n/(bk + k^d) \to 0$ we have that $(bk + k^d)/n \to \infty$ so there is a subsequence $n_i$ such that $b(n_i)k(n_i)/n_i \to \infty$ or $k(n_i)^d/n_i\to \infty$, or equivalently $n_i/(b(n_i)k(n_i)) \to 0$ or $n_i/k(n_i)^d \to 0$. We will show that both cases lead to a contradiction. We will let $b$ and $k$ be functions of $n_i$ now implictly when defining limits. \\
	  \textbf{Case $n_i/(bk) \to 0$:}
	  We proceed similarly to the proof of Theorem \ref{thm:lower}. Let $\left(p_n\right)_{n=1}^\infty$, $\tP_n$, and $\sX_n$ be defined as in the proof of Theorem \ref{thm:lower}. Note that $\sH_{d,b}^k \subset \tsH_{d,b}^k$ (see proof of Lemma \ref{lem:tuckbias}) and thus $\tP_n \in \tsH_{d,b}^k$. We can proceed exactly as in our proof of Theorem \ref{thm:lower} at footnote \ref{fn:lower}, by simply replacing $\sH_{d,b}^k$ with $\tsH_{d,b}^k$ and $n$ with $n_i$ which finishes this case.\\
	  \textbf{Case $n_i/k^d \to 0$:} Let $(p_n)_{n=1}^\infty$ be a sequence of elements in $\sT_{d,k}$ which represents distributions over $[k]^d$. Let $\sX_n \triangleq  \left(X_{n,1},\ldots,X_{n,n} \right)$ with $X_{n,1},\ldots, X_{n,n}\simiid p_n$. Let $\tsX_n = \left(\tX_{n,1},\ldots, \tX_{n,n} \right)$ which are independent random variables with $\tX_{n,i}\sim h_{d,b,X_{n,i}}$. Let $\tP_n$ be the density for $\tX_{n,i}$. Note that $k\le b$. So we have that
	  \begin{align*}
		  \tP_n 
		  &= \sum_{S\in [k]^d} p_{\tX_{n,i}|X_{n,i}= S}P(X_{n,i} = S)\\
		  &= \sum_{S\in [k]^d} h_{d,b,S}p_n(S)\\
		  &= \sum_{S\in [k]^d}p_n(S)\prod_{i=1}^d h_{1,b,S_i}
	  \end{align*}
	  and thus $\tP_n \in \tsH_{d,b}^k$. We proceed as in Theorem \ref{thm:lower} to find an estimator for elements of $\sT_{d,k}$ which is equivalent to estimating elements of $\Delta^{k^d}$ which is impossible since $n_i/k^d \to 0$.
  \end{proof}

\end{document}